\newtheorem{theorem}{Theorem}[section]
\newtheorem{thm}{Theorem}[section]
 \newtheorem{cor}[thm]{Corollary} 
 \newtheorem{remark}[thm]{Remark}
 \newtheorem{prop}[thm]{Proposition}
 \newtheorem{definition}[thm]{Definition}
 \newcommand{\grad}{\nabla\,}
 \newcommand\raisepunct[1]{\,\mathpunct{\raisebox{0.5ex}{#1}}}
\title{Concentration of mean exit times}
\author[G. P. Bessa]{G. Pacelli Bessa}
\address{Department of Mathematics, Universidade Federal do Cear\'{a}-UFC, 60.440-900, Fortaleza, Brazil}
\email{bessa@mat.ufc.br}
\author[V. Gimeno]{Vicent Gimeno i Garcia}
\address{Department of Mathematics, Universitat Jaume I-IMAC,   E-12071, 
Castell\'{o}, Spain}
\email{gimenov@uji.es}
\author[V. Palmer]{Vicente Palmer}
\address{Department of Mathematics, Universitat Jaume I-INIT,   E-12071, 
Castell\'{o}, Spain}
\email{palmer@mat.uji.es}
\begin{document}
\subjclass[2020]{Primary 58J90, 58J65,  53C42}
\keywords{Mean Exit Time Function, Measure Concentration}%, Compact Minimal Submanifolds}
\thanks{Research partially supported by the Research grant  PID2020\- -115930GA\- -100 funded by MCIN/ AEI /10.13039/50110001103 and  by the project AICO/2023/035 funded by Conselleria d’Educació, Cultura, Universitats i Ocupació}

\begin{abstract}%\textcolor{blue}{Version v8}
     The mean exit time function defined on the $\delta$-tube %$T_{\delta}(\mathbb{S}^{n-1})$ 
     around any equator $\mathbb{S}^{n-1} \subseteq \mathbb{S}^{n}$ of the sphere $\mathbb{S}^{n}$, ($0<\delta<\pi/2$), goes to infinity with the dimension, so that when we consider a Brownian particle that begins its motion at one equator of the sphere, this particle will remain near this equator for an almost infinite amount of time when the dimension of the sphere goes to infinity. On the other hand, if the Brownian particle begins its motion at the North pole, then this particle will leave quickly, when the dimension of the sphere goes to infinity, any geodesic ball with radius $\delta <\pi/2$, centered at this point. Namely, the mean exit time function defined on the equatorial tubes presents a kind of {\em concentration} phenomenon or {\em fat equator} effect, as it has been described in the book \cite{MS}. 
     
     Moreover, the same concentration phenomenon occurs when we consider this mean exit time function defined on tubes around closed and minimal hypersurfaces of a compact Riemannian $n$-manifold $M$ with Ricci curvature bounded from below, ${\rm Ric}_{M}\geq (n-1)$. Namely, a Brownian particle that begins its random movement around a closed embedded minimal hypersurface of a compact $n$-manifold $M$ with  ${\rm Ric}_{M}\geq (n-1)$ will wanders arbitrarily close to the hypersurface for a time that approaches infinity as the dimension of the ambient manifold does so as well. 
\end{abstract}
\maketitle
\section{Introduction}

Let $\{X_t\}$ be the standard Brownian motion on a complete Riemannian manifold $M$ and $\Omega \subset M$ be a nonempty subset. The expected value $\mathbb{E}(\tau^{x}_{\Omega})$ of the variable  \begin{equation}\tau_{\Omega}^{x}=\inf\{t>0 \colon   X_t\not \in \Omega; X_0=x\in \Omega\}\in [0, \infty]\label{eq1}\end{equation}  defines a (possibly extended) function  $E_{\Omega} \colon \Omega \to [0, \infty]$,  by  $x\to E_{\Omega}(x)=\mathbb{E}(\tau^{x}_{\Omega})$,  called the mean exit time function of $\Omega$. If $\Omega$ is bounded with $\partial \Omega \neq \emptyset$ then $E_{\Omega}<\infty$.
The mean exit time is a fundamental concept intertwining  Riemannian geometry and stochastic processes, describing the expected time for a Brownian particle  to leave a given domain for the first time. For instance, the mean exit time is related to the first eigenvalue,   
\cite{banuellos-carroll94}, \cite{banuelos23}, \cite{DM}, \cite{HMP12}, \cite{HMP16}, to isoperimetric inequalities \cite{banuellos-carroll11}, \cite{Cadeddu15}, \cite{palmer99} and to many other geometric aspects of the manifolds as one can  see in \cite{gimeno-palmer21}, \cite{gual-maso05}  \cite{HMP09},  \cite{karp-pinsky}, \cite{markvorsen89}, \cite{miquel-palmer92}
\cite{palmer98} and  references therein. 
%Let $\Omega_1\subset \Omega_2\subset \cdots$ be an exhaustion sequence of $M$ by compact subsets $\Omega_i$. The global mean exit time function of $M$ is a possibly extended function $E_{M}\colon M\to [0, \infty]$ defined as follows     $$E_M(x)=\lim_{i\to \infty}E_{\Omega_i}(x).$$  The basic question is what geometries of complete Riemannian manifold have finite global mean exit time, \cite{bmp}, \cite{bessa-montenegro2009} and \cite{markvorsen89} since it is reasonable to expect that the time a Brownian motion leaves every domain $\Omega_i$ of an exhaustion sequence of a complete non-compact is infinite.

In this article, and in a situation very similar to that contemplated in the paper \cite{GiPa}, we will consider the mean exit time function of a tubular neighborhood of the sphere's equator $\mathbb{S}^{n-1}$ and show that we have a concentration phenomenon similar to the volume concentration on the sphere, as it has been described in the book \cite{MS}, (see too Section 9 in \cite{Gromov}).  To put our result in perspective, while emphasizying the more geometric aspect of the phenomenon of concentration of measure in the sphere, let us consider  $\mathbb{S}^{n}\subset \mathbb{R}^{n+1}$ be the unit sphere equipped with the metric of curvature $+1$ and consider the $\delta$-tube $T_{\delta}(\mathbb{S}^{n-1})$ around the equator  $\mathbb{S}^{n-1}\subset\mathbb{S}^n$, with $0<\delta<\pi/2$. It is well known that Levy's isoperimetric inequality in the sphere implies the inequality, (see \cite{MS}),   $${\rm vol}(T_{\delta}(\mathbb{S}^{n-1}))\geq (1-2e^{-(n-1)\delta^2/2}){\rm vol}(\mathbb{S}^{n}).$$

This inequality implies, in its turn, that when the dimension $n$ of the sphere increases, then the volume of the tube ${\rm vol}(T_{\delta}(\mathbb{S}^{n-1}))$ approaches the volume of the entire sphere $\mathbb{S}^{n}$, for any $0<\delta<\pi/2$.

%and consider the uniform probability measure $\mu_n$ on $\mathbb{S}^n_{\raisepunct{,}}$, i.e., $\mu_{n}(\mathbb{S}^{n})=1$.

%For   any given measurable subset $A$ of $\mathbb{S}^{n}$ and $\delta >0$, the $\delta$-fattening of $A$   is defined by $$A_{\delta}=\{x\in \mathbb{S}^{n}\colon {\rm dist}_{\mathbb{S}^{n}}(x,A)<\delta\}.$$ It is known that  $\mu_{n}(A_{\delta})\geq 1- e^{-(n-1)\delta^2/2} $ if $\mu (A)\geq 1/2$. This is known as the Levy's measure concentration on the sphere,  see \cite{levy}. Therefore $\mu_n(\mathbb{S}^{n}\setminus  A_{\delta}))\leq e^{-(n-1)\delta^2/2} $.   If $\Omega_N=B_{\pi/2}(N)\subset \mathbb{S}^{n}$ and $\Omega_S=B_{\pi/2}(S)\subset \mathbb{S}^{n}$ are the geodesic balls of radius $\pi/2$ centered at the north and south pole respectively then the intersection of the fattenings of $\Omega_N$ and $\Omega_S$ is the $\delta$-tube around the  of the equator $\mathbb{S}^{n-1}\subset\mathbb{S}^n$ 

%then $$\mu_n(\mathbb{S}^{n}\setminus  A_{\delta}))=\mu_n(B_{\epsilon(\delta)}(S))\leq e^{-(n-1)\delta^2/2},$$ where $\epsilon(\delta)=\pi/2-\delta$. Likewise, letting  $C=B_{\pi/2}(S)\subset \mathbb{S}^{n}$ be the geodesic balls of radius $\pi/2$ centered at  the  south pole $S$ we have $$\mu_n(\mathbb{S}^{n}\setminus  C_{\delta}))=\mu_n(B_{\epsilon(\delta)}(N))\leq e^{-(n-1)\delta^2/2}.$$Therefore, $$\mu_n(A_\delta \cap C_{\delta})\geq 1-2e^{-(n-1)\delta^2/2}$$

\begin{figure}    
\begin{center}

\begin{tikzpicture}[scale=1.5,>=stealth]%,rotate around={135:(0,0)}
		\def\u{1}
		\def\v{2}
		\def\z{0.3}
		
		\coordinate (p) at (0,0);
		\coordinate (p0) at ($ (p)+(\u,0) $);
		\coordinate (p1) at ($ (p)+(0,\u cm) $);
         \coordinate (p4) at ($ (p)-(0,\u cm) $);
		
		\def\k{45}
		\coordinate (p2) at ($ (p1)+(0,\u cm) $);
		\coordinate (p3) at ($ (p)+(\k:\v cm) $);
		
		\path[draw,name path=disco, line width=1pt] (p) circle (\u cm);
		\path[draw=black!10,name path=disco, fill=black!10] (p1) circle (1pt)node[above] {$ N $};

		\path[draw,name path=arco2,line width=1pt] (p0) arc (360:180:\u cm and 0.3*\u cm);
		\path[draw,name path=arco1,dashed] (p0) arc (0:180:\u cm and 0.3*\u cm);

		\begin{scope}
		\clip[] (p) circle (\u cm);
		\path[draw,name path=arco3] (1,0.3) arc (360:180:\u cm and 0.3*\u cm);
		\path[draw,name path=arco4] (1,-0.3) arc (360:180:\u cm and 0.3*\u cm);
		\path[draw,name path=arco5,fill=white!70!black] (1,-0.3) arc (360:180:\u cm and 0.3*\u cm);
		\fill[white!70!black] (-1,-0.3) rectangle (1,0.3);
		\path[draw,name path=arco1,dashed] (1,0.2) arc (0:180:\u cm and 0.3*\u cm);
	    \end{scope}

	    \begin{scope}
	    	\clip[] (p) circle (\u cm);
	    	\path[draw,name path=arco3,fill=white] (1,0.3) arc (360:180:\u cm and 0.3*\u cm);
	    	\path[draw,name path=arco4] (1,-0.3) arc (360:180:\u cm and 0.3*\u cm);
	    	
	    \end{scope}
	    
	    \path[draw,name path=disco, line width=1pt] (p) circle (\u cm);
	    \path[draw,name path=arco2,line width=1pt] (p0) arc (360:180:\u cm and 0.3*\u cm);
	    \path[draw=black!10,name path=disco, fill=black!10] (p1) circle (1pt)node[above] {$ N $};
      \path[draw=black!10,name path=disco, fill=black!10] (p4) circle (1pt)node[below] {$ S$};
	    \path[draw,name path=arco1,dashed] (p0) arc (0:180:\u cm and 0.3*\u cm);

	    \node[] (t1) at (2,1) {$ B_{(\pi/2-\delta)}(p_N) $};
	    \node[] (t2) at (2,0.4) {$T_{\delta}(\mathbb{S}^{n-1})$};
        \node[] (t4) at (2,-1) {$ B_{(\pi/2-\delta)}(p_S) $};

	    \coordinate (p4) at (45:1.1cm);
	    \coordinate (p5) at (-10:1.1cm);
        \coordinate (p6) at (-45:1.1cm);
	    
	    \draw[->,shorten <=4pt] (t1) to[out=-90,in=45] (p4);
	    \draw[->,shorten <=4pt] (t2) to[out=-90,in=0] (p5);
        \draw[->,shorten <=4pt] (t4) to[out=-90,in=-45] (p6);

	\end{tikzpicture}
\end{center}
\caption{Tube $T_\delta(\mathbb{S}^{n-1})$ of radius $\delta$ around the 
 equator $\mathbb{S}^{n-1}$ and geodesic balls $B_{\pi/2-\delta}(p_N)$, $B_{\pi/2-\delta}(p_S)$ of radius $\pi/2-\delta$ centered at the north ans south pole respectively.}
    \label{figure}
\end{figure}
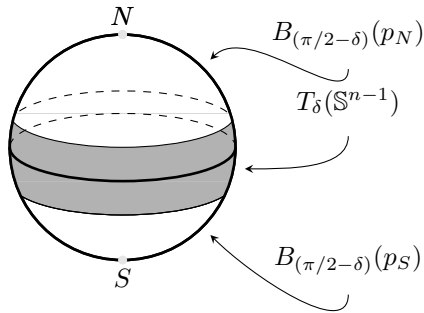
%and its measure concentrates, i.e. $$\mu(\mathbb{S}^{n-1}_\delta)\geq 1-2e^{-(n-1)\delta^2/2}.$$ 
This geometric interpretation of the concentration of measure phenomenon suggests that the time that the standard Brownian motion $X_t\in \mathbb{S}^n$ takes to leave $T_{\delta}(\mathbb{S}^{n-1})$ concentrates as well, in the sense that the mean exit time of the first exit of $X_t$ of $T_{\delta}(\mathbb{S}^{n-1})$ tends to $+\infty$  as $n\to \infty$. In order to present our results, we are going to introduce some notation and terminology.

Let us consider $\mathbb{S}^{n}\subset \mathbb{R}^{n+1}$ be the $n$-sphere of curvature $1$. Let us fix the north pole $p_N=(0,\cdots,0, 1)$  and let us consider the geodesic ball $B^{\mathbb{S}^{n}}_{\pi/2}(p_N)$ of radius $\pi/2$ centered at $p_N$, see figure \ref{figure}. We shall consider $\mathbb{S}^{n-1}=\partial B^{\mathbb{S}^{n}}_{\pi/2}(p_N)$ the equator with respect to  $p_N$. Note that any point of the sphere can be placed in the north pole position using the accurate rotation, so the arguments and the results we present in the paper, which refer to quantities that are invariant under the action of isometries, apply for any point and any equator of the sphere. We are considering too the equator $ \mathbb{S}^{n-1}$ immersed in $\mathbb{S}^{n}$ as a totally geodesic submanifold, $\varphi: \mathbb{S}^{n-1} \rightarrow \mathbb{S}^{n}$.

Let $T_\delta(\mathbb{S}^{n-1})$ be the tube of radius $\delta$ around $\mathbb{S}^{n-1}$, in the ambient sphere $\mathbb{S}^{n}$, defined below, (see Section \ref{tubes}), using the distance function $s(x)={\rm dist}_{\mathbb{S}^{n}}( x\, , \mathbb{S}^{n-1}).$  Let $u_{\delta,n}\colon T_\delta(\mathbb{S}^{n-1}) \to [0, \infty)$ be the mean exit time function defined on  $T_\delta(\mathbb{S}^{n-1})$. It is well known that this function is radial, namely,  $u_{\delta}(x)=u_{\delta}(s(x))$ and satisfies  the following Poisson problem, see \cite{dynkin},

\begin{equation}\label{eqMET2}
    \left\{\begin{array}{rl}
         \triangle_{_{\mathbb{S}^{n}}} u_{\delta,n}+1=0& \!{\rm in}\,\,T_{\delta}(\mathbb{S}^{n-1})  \\
         u_{\delta,n}=0&\!\!{\rm on} \, \,\partial T_{\delta}(\mathbb{S}^{n-1}).
    \end{array}\right.
\end{equation}

The mean exit time function $v_{\delta,n}(x)=v_{\delta,n}(t(x))$ from the ball $B_{\pi/2 - \delta}(p_N)$ of radius $\pi/2 - \delta$ and center at the north pole $p_N$ is radial too, in the sense that depends on the distance to the north pole, $t(x)= {\rm dist}_{\mathbb{S}^{n}}( x\, , p_N)$ and satisfies the equation
%us consider now the point $p_N$ such that $s(p_N)={\rm dist}_{\mathbb{S}^{n}}(p_N, \mathbb{S}^{n-1})=\pi/2$, and let us consider the geodesic ball centered at this point $p_N$, $B^{\mathbb{S}^{n}}_{\pi/2 - \delta}(p_N)$.  The mean exit time function $v_{\delta}(t(x))$ from this geodesic ball,  where now $t(x)={\rm dist}_{\mathbb{S}^{n}}(x, p_N)$ is the solution of equation 
\begin{equation}\label{eqMET3}
    \left\{\begin{array}{rl}
         \triangle_{_{\mathbb{S}^{n}}} v_{\delta,n}+1=0& \!{\rm in}\,\,B_{\pi/2 - \delta}(p_N)  \\
         v_{\delta,n}=0&\!\!{\rm on} \, \,\partial B_{\pi/2 - \delta}(p_N).
    \end{array}\right.
\end{equation}

Our first result confirms the suggestion that the mean exit time defined on the $\delta$-tube around the equator $T_{\delta}(\mathbb{S}^{n-1})$, $0<\delta<\pi/2$, goes to infinity with the dimension,  while goes to zero, with the dimension too, when we consider it defined on the geodesic balls. Namely, the mean exit time function presents a kind of {\em concentration} phenomenon when the dimension of the sphere goes to infinity which can be described in the following terms, depending on whether we are considering the solution of the problems \eqref{eqMET2} or \eqref{eqMET3}:

When we consider a Brownian particle that begins its motion at one equator of the sphere, this particle will remain near this equator for an almost infinite amount of time when the dimension of the sphere goes to infinity, which in its turn comes from the fact that the mean exit time function $u_{\delta,n}$ will be bounded from below by a one-variable function $F_{\delta,n}$ which goes to infinity in each value of its dominion when $n$ goes to infinity. 

On the other hand, if the Brownian particle begins its motion at the North pole, then this particle will leave quickly, when the dimension of the sphere goes to infinity, any geodesic ball centered at the North pole, with radius $\delta <\pi/2$, because the mean exit time function $v_{\delta,n}$ will be uniformly bounded from above by a constant depending on $\delta$ and $n$ which goes to zero when $n$ goes to infinity.
 
 All these considerations are formalized in the following statement.

\begin{theorem}\label{thmmain}
Let us consider the tube  $T_\delta(\mathbb{S}^{n-1}) \subseteq \mathbb{S}^{n}$, of radius $\delta \in (0,\pi/2)$, around the totally geodesic submanifold $\varphi: \mathbb{S}^{n-1} \rightarrow \mathbb{S}^{n}$, and the geodesic ball $B_{\pi/2-\delta}(p_N) \subseteq \mathbb{S}^{n}$ centered at the north pole $p_N$. Then, we have the following statements:
\begin{itemize}
\item[i)] The mean exit time function $u_{\delta,n}$ defined on the tube $T_{\delta}(\mathbb{S}^{n-1})$, is the radial solution of \eqref{eqMET2}, and it is bounded from below by the function $F_{\delta,n}\circ s$ as
$$
\begin{aligned}
u_{\delta,n}(x)=u_{\delta,n}(s(x))&\geq (F_{\delta,n}\circ s)(x)\,\,\,\forall x \in T_{\delta}(\mathbb{S}^{n-1})\raisepunct{.}
\end{aligned} $$
\noindent where $s(x)=\mathrm{dist}_{\mathbb{S}^{n}}(x,\mathbb{S}^{n-1})$ and  $F_{\delta,n}: (0,\delta) \rightarrow \mathbb{R}$ is the function defined as
$$F_{\delta,n}(r)=\frac{(\delta-r)}{\sin(\delta)}\cdot\frac{1-\cos^{n}(r)}{n\cos^{n-1}(r)}\raisepunct{.} $$
\noindent which satisfies  that
$$ \lim_{n\to \infty}F_{\delta,n}(r)=\infty \,\,\,\forall r \in (0,\delta)$$

\item[ii)] The mean exit time function $v_{\delta,n}$ from the ball  $B_{\pi/2-\delta}(p_N) \subseteq \mathbb{S}^{n}$ is the radial solution of \eqref{eqMET3}, and it is uniformly  bounded from above as $$v_{\delta,n}(y)=v_{\delta,n}(t(y))\leq G_{\delta,n}:= \frac{1-\cos(\pi/2-\delta)}{n\cos(\pi/2-\delta)}\cdot$$\label{thmprelim}
\noindent where $t(y)=\mathrm{dist}_{\mathbb{S}^{n}}(y,p_N)$, and we have that the uniform bound $G_{\delta,n}$ satisfies
 $$\lim_{n\to \infty}G_{\delta,n}=0\, .$$
 \end{itemize}
\end{theorem}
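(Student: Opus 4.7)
The plan is to reduce each Poisson problem to a one-variable radial ODE and then extract the two bounds by different means: an explicit integral estimate for (i) and a supersolution / maximum-principle argument for (ii).

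For (i), since the equator $\mathbb{S}^{n-1}$ is totally geodesic, in tubular coordinates the metric of $\mathbb{S}^n$ near $\mathbb{S}^{n-1}$ is the warped product $ds^2 + \cos^2(s)\, g_{\mathbb{S}^{n-1}}$ with $s$ the signed distance to the equator. Then $u_{\delta,n}$ is even in $s$ and on $(0,\delta)$ equation \eqref{eqMET2} reduces to $(\cos^{n-1}(s)\, u_{\delta,n}'(s))' = -\cos^{n-1}(s)$, with $u_{\delta,n}'(0) = 0$ by symmetry and $u_{\delta,n}(\delta) = 0$. Two quadratures produce
\begin{equation*}
u_{\delta,n}(r) \;=\; \int_r^{\delta} \frac{1}{\cos^{n-1}(s)} \int_0^s \cos^{n-1}(u)\, du\, ds.
\end{equation*}
The outer integrand $\frac{\int_0^s \cos^{n-1}(u)\,du}{\cos^{n-1}(s)}$ is nondecreasing in $s$ on $(0,\pi/2)$, so bounding it below by its value at $s = r$ gives $u_{\delta,n}(r) \geq (\delta-r)\cos^{-(n-1)}(r) \int_0^r \cos^{n-1}(u)\, du$. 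For the inner integral I would use $\sin(u) \leq \sin(\delta)$ on $[0,\delta]$ together with the elementary antiderivative $\int_0^r \cos^{n-1}(u)\sin(u)\,du = (1-\cos^n(r))/n$ to deduce $\int_0^r \cos^{n-1}(u)\,du \geq (1-\cos^n(r))/(n\sin(\delta))$. Chaining yields exactly $F_{\delta,n}(r)$. Pointwise divergence at fixed $r \in (0,\delta)$ then follows from $\cos(r) < 1$, which forces $1-\cos^n(r) \to 1$ while $n\cos^{n-1}(r) \to 0$.

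For (ii), in geodesic polar coordinates at $p_N$ the sphere metric is $dt^2 + \sin^2(t)\, g_{\mathbb{S}^{n-1}}$ and the radial Laplacian is $f''(t) + (n-1)\cot(t) f'(t)$. Rather than solving the ODE explicitly, I would produce a supersolution: set
\begin{equation*}
\psi(t) := \frac{\cos(t) - \cos(\pi/2-\delta)}{n\cos(\pi/2-\delta)}.
\end{equation*}
Using the standard eigenfunction identity $\Delta \cos(t) = -n\cos(t)$ on $\mathbb{S}^n$ (coming from $\cos(t)$ being the restriction of the height function, a first spherical harmonic), one computes $\Delta\psi(t) = -\cos(t)/\cos(\pi/2-\delta) \leq -1$ throughout $B_{\pi/2-\delta}(p_N)$, while $\psi$ vanishes on the boundary. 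Then $\psi - v_{\delta,n}$ is superharmonic and vanishes on $\partial B_{\pi/2-\delta}(p_N)$, so the weak maximum principle yields $v_{\delta,n}(y) \leq \psi(t(y)) \leq \psi(0) = G_{\delta,n}$. That $G_{\delta,n} = (1-\sin\delta)/(n\sin\delta) \to 0$ as $n\to\infty$ is immediate.

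The main obstacle I anticipate is cosmetic rather than conceptual: arranging the estimates in (i) so that the lower bound matches the exact closed form $F_{\delta,n}$. The ``obvious'' estimate using $\sin(u) \leq \sin(r)$ would produce a sharper bound with $\sin(r)$ in the denominator instead of $\sin(\delta)$; the deliberately wasteful replacement by $\sin(\delta)$ is what yields the specified function. Beyond this bookkeeping point, both arguments are short and rely only on the explicit warped-product structure together with the comparison principle.
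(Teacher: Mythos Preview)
Your proposal is correct. For part (i) your argument is essentially the paper's: derive the same radial integral formula and chain the same monotonicity estimates (you bound the full outer integrand $\int_0^s\cos^{n-1}\!/\cos^{n-1}(s)$ from below by its value at $r$, whereas the paper first pulls out $1/\cos^{n-1}(\tau)\ge 1/\cos^{n-1}(s)$ and only at the end bounds $1-\cos^n(\tau)\ge 1-\cos^n(s)$; the net effect is identical). Your observation that $\sin(r)$ would give a sharper denominator than $\sin(\delta)$ is accurate and matches the paper's deliberately loose step.

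For part (ii) you take a genuinely different route. The paper writes down the explicit solution $v_\delta(t)=\int_t^{\pi/2-\delta}\sin^{-(n-1)}(\tau)\int_0^\tau\sin^{n-1}(l)\,dl\,d\tau$ and bounds it by inserting the factor $\cos(l)/\cos(\pi/2-\delta)\ge 1$ into the inner integral, which makes it exactly integrable; this lands on the pointwise bound $(\cos t-\cos(\pi/2-\delta))/(n\cos(\pi/2-\delta))$ and then $G_{\delta,n}$. Your supersolution argument via the first spherical harmonic identity $\Delta\cos t=-n\cos t$ reaches the very same pointwise bound without ever writing down the integral formula, replacing an explicit calculus estimate by a one-line maximum-principle comparison. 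The paper's approach has the virtue of exhibiting $v_\delta$ explicitly (which it reuses later for the higher exit moments in Corollary~\ref{thmmain2}); yours is cleaner and more conceptual for the bound itself.
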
 

The mean exit time function defined in a domain $\Omega \subseteq M$  is the first in a sequence of functions $\{u_0=1,u_{1,\Omega,n}=E_\Omega\;, u_{2,\Omega,n}\;, ....\}$  defined inductively as follows

\begin{equation}\label{poisson}
		\begin{split}
			\Delta^Mu_{1,\Omega,n}+1 & =0,\,\text{ on }\Omega,\\
			u_{1,\Omega,n}\lvert_{_{\partial \Omega}} & = 0,
		\end{split}
	\end{equation}
	
	\noindent and, for $k\geq 2$,
	
	\begin{equation}\label{poissonk}
		\begin{split}
			\Delta^Mu_{k,\Omega,n}+ku_{k-1,\Omega,n} & =  0,\,\, \text{on }\,\,\Omega,\\
			u_{k,\Omega,n}\lvert_{_{\partial \Omega}} & = 0.
		\end{split}
	\end{equation}

\noindent This sequence is the so-called {\em Poisson hierarchy for} $\Omega$,  see \cite{DLD}, \cite{Mc2}.

As occurs with the mean exit time function $u_{1,\Omega,n}=E_\Omega$, (which is the second element of the Poisson hierarchy, or, in fact, its first non-trivial element), it can be proved, (see \cite{Mc2} and \cite{Ha}), that, given the domain $\Omega \subseteq M$,  the $k^{\rm th}$-element of the Poisson hierarchy for $\Omega$ agrees with the  $k^{\rm th}$-exit moment of $\Omega$, $k\geq 1$, which is the expected value of the $k^{\rm th}$-power of  variable $\tau_{\Omega}$ given in \eqref{eq1}, i.e., 
$$u_{k,\Omega,n}(x)=\mathbb{E}[(\tau_{\Omega}^{x})^k]$$

When we restrict ourselves to our setting, in which we have $\Omega=T_{\delta}(\mathbb{S}^{n-1})$ or $\Omega=B_{\pi/2-\delta}(p_N)$, we will adopt the following notation for the elements of the Poisson hierarchy: in case $\Omega=T_{\delta}(\mathbb{S}^{n-1})$, then $u_{k,\Omega,n}=u_{\delta,k,n}(x)$ and in case $\Omega=B_{\pi/2-\delta}(p_N)$, then  $v_{k,\Omega,n}=v_{\delta,k,n}(x)$. 

In view of Theorem \eqref{thmmain},  it is natural to expect that the $k^{\rm th}$-exit moments, $u_{\delta,k,n}$ and $v_{\delta,k,n}$, experience the same concentration phenomenon that the mean exit time functions $u_{\delta,1,n}$ and $v_{\delta,1,n}$.
Indeed, this is the content of the following result.

\begin{cor}\label{thmmain2}  

Let us consider the tube $T_\delta(\mathbb{S}^{n-1}) \subseteq \mathbb{S}^{n}$, of radius $\delta \in (0,\pi/2)$, around the totally geodesic submanifold $\varphi: \mathbb{S}^{n-1} \rightarrow \mathbb{S}^{n}$, and the geodesic ball $B_{\pi/2-\delta}(p_N) \subseteq \mathbb{S}^{n}$ centered at the north pole $p_N$. 
\begin{itemize}
\item[i)] The $k^{\rm th}$-exit moment $u_{\delta,k,n}$ defined on the tube $T_{\delta}(\mathbb{S}^{n-1})$ is bounded from below by the  a function $F_{\delta,k,n}\circ s$ as
$$
\begin{aligned}
u_{\delta,k,n}(x)&=u_{\delta,k,n}(s(x))\geq (F_{\delta,k,n}\circ s)(x)\,\,\,\,\forall x \in T_{\delta}(\mathbb{S}^{n-1})\raisepunct{.}
\end{aligned} $$
\noindent where $s(x)=\mathrm{dist}_{\mathbb{S}^{n}}(x,\mathbb{S}^{n-1})$ and  $F_{\delta,k,n}: (0,\delta) \rightarrow \mathbb{R}$ is a function which satisfies  that
$$ \lim_{n\to \infty}F_{\delta,k,n}(r)=\infty \,\,\,\forall r \in (0,\delta), \,\,\,\forall k \in \mathbb{N} $$

\item[ii)] The $k^{\rm th}$-exit moment  $v_{\delta,k,n}$ defined on the ball  $B_{\pi/2-\delta}(p_N) \subseteq \mathbb{S}^{n}$  is uniformly  bounded from above as 
$$ v_{\delta,k,n}(s(x)) \leq k! \left(v_{\delta,1,n}(0)\right)^k $$ \noindent and we have that the unifom bound $k! \left(v_{\delta,1,n}(0)\right)^k $ satisfies
 $$\lim_{n\to \infty}k! \left(v_{\delta,1,n}(0)\right)^k =0\, .$$
 \end{itemize}
\end{cor}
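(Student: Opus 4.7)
My plan is to deduce both items directly from Theorem~\ref{thmmain}, exploiting the probabilistic interpretation $u_{k,\Omega,n}(x)=\mathbb{E}[(\tau_\Omega^x)^k]$ already recalled in the paper, and avoiding any further explicit ODE analysis.

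For part (i), the key observation is that $t\mapsto t^k$ is convex on $[0,\infty)$ and $\tau_{T_\delta(\mathbb{S}^{n-1})}^{x}\ge 0$, so Jensen's inequality gives
$$u_{\delta,k,n}(x)=\mathbb{E}\bigl[(\tau_{T_\delta(\mathbb{S}^{n-1})}^{x})^{k}\bigr]\ge \bigl(\mathbb{E}[\tau_{T_\delta(\mathbb{S}^{n-1})}^{x}]\bigr)^{k}=\bigl(u_{\delta,1,n}(x)\bigr)^{k}.$$
Combined with the already established lower bound $u_{\delta,1,n}(x)\ge F_{\delta,n}(s(x))\ge 0$ from Theorem~\ref{thmmain}(i), I would simply set
$$F_{\delta,k,n}(r):=\bigl(F_{\delta,n}(r)\bigr)^{k},$$
whence $u_{\delta,k,n}(x)\ge F_{\delta,k,n}(s(x))$ and the limit $\lim_{n\to\infty}F_{\delta,k,n}(r)=\infty$ is immediate from the corresponding property of $F_{\delta,n}$ in Theorem~\ref{thmmain}(i).

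For part (ii) I would run an iterated maximum-principle estimate along the Poisson hierarchy on $\Omega=B_{\pi/2-\delta}(p_N)$. From \eqref{poissonk} and \eqref{poisson} one has, for each $k\ge 2$,
$$\Delta^{\mathbb{S}^{n}}\!\bigl(k\,\|v_{\delta,k-1,n}\|_{\infty}\,v_{\delta,1,n}-v_{\delta,k,n}\bigr)=-k\bigl(\|v_{\delta,k-1,n}\|_{\infty}-v_{\delta,k-1,n}\bigr)\le 0,$$
together with zero boundary values on $\partial\Omega$, so the minimum principle for superharmonic functions gives $v_{\delta,k,n}(y)\le k\,\|v_{\delta,1,n}\|_{\infty}\,\|v_{\delta,k-1,n}\|_{\infty}$. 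Since $v_{\delta,1,n}$ is a radial decreasing function on the ball, $\|v_{\delta,1,n}\|_{\infty}=v_{\delta,1,n}(0)$; iterating the previous estimate down to $k=1$ then yields $v_{\delta,k,n}(y)\le k!\,(v_{\delta,1,n}(0))^{k}$, and Theorem~\ref{thmmain}(ii) closes the argument via $v_{\delta,1,n}(0)\le G_{\delta,n}\to 0$.

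I do not anticipate any serious obstacle here: once the identification of the Poisson hierarchy with the exit moments is taken as given, the corollary reduces to Jensen's inequality for the lower bound and a standard Khas'minskii-type $L^{\infty}$ iteration for the upper bound, with Theorem~\ref{thmmain} supplying all the asymptotic behaviour in $n$. The only mildly delicate point to record in a clean write-up is that $v_{\delta,1,n}$ is radially monotone, but this follows at once from the explicit radial ODE $(\sin^{n-1}(t)\,v_{\delta,1,n}'(t))'=-\sin^{n-1}(t)$ together with $v_{\delta,1,n}'(0)=0$.
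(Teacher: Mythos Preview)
Your argument is correct, and for part~(i) it is genuinely different from---and considerably simpler than---the paper's proof. The paper proceeds by induction on $k$ using the explicit integral representation
\[
u_{\delta,k,n}(s)=k\int_{s}^{\delta}\frac{1}{\cos^{n-1}(\tau)}\int_{0}^{\tau}\cos^{n-1}(\xi)\,u_{\delta,k-1,n}(\xi)\,d\xi\,d\tau,
\]
splitting the outer integral at $(s+\delta)/2$, exploiting monotonicity of $u_{\delta,k-1,n}$, and building a recursive formula $F_{\delta,k,n}(r)=k\,F_{\delta,k-1,n}\bigl((r+\delta)/2\bigr)\cdot\dfrac{(\delta-r)(1-\cos^{n}r)}{2n\cos^{n-1}(r)\sin((r+\delta)/2)}$. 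Your Jensen route bypasses all of this: one line yields $u_{\delta,k,n}\ge (u_{\delta,1,n})^{k}\ge (F_{\delta,n})^{k}$, and the divergence in $n$ is inherited directly from Theorem~\ref{thmmain}(i). What the paper's approach buys is an explicit recursive lower bound tied to the integral hierarchy (useful later when the same estimates are transplanted to $T_\delta(\Sigma)$ in Corollary~\ref{cor:moments}); what your approach buys is a proof that requires no analysis beyond the $k=1$ case and the probabilistic identification $u_{\delta,k,n}=\mathbb{E}[\tau^{k}]$.

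For part~(ii) the two proofs coincide in substance: the paper simply cites \cite{bmp} for the bound $v_{\delta,k,n}\le k!\,(v_{\delta,1,n}(0))^{k}$, and your maximum-principle iteration is exactly the standard Khas'minskii-type argument that reference contains. One small write-up point: the minimum principle applied to $w=k\|v_{\delta,k-1,n}\|_\infty\,v_{\delta,1,n}-v_{\delta,k,n}$ first gives the pointwise bound $v_{\delta,k,n}(y)\le k\|v_{\delta,k-1,n}\|_\infty\,v_{\delta,1,n}(y)$, and only after taking the supremum in $y$ do you get the inequality you state; make that intermediate step explicit in a clean version.
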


%The interpretation of this result is that a Brownian motion $X_t$ starting at the north pole $X_0=N$  leaves the ball $B_{\delta}=B_{\pi/2-\delta}(N))$ very fast and enters the fattening $\Sigma_{\delta}$ and stays there for a long time before leaving again if $n\gg 1$.

Our second main result generalizes the concentration phenomenon satisfied by the mean exit time established in Theorem \ref{thmmain}, when we consider this function defined on tubes around closed and minimal hypersurfaces of a compact Riemannian $n$-manifold $M$ with Ricci curvature bounded from below, ${\rm Ric}_{M}\geq (n-1)$. 

Namely, we have obtained a comparison theorem for the mean exit time $w_{\delta,n}$ of the $\delta$-tube $T_\delta(\Sigma)$ around a compact embedded minimal hypersurface $\Sigma$ of  a compact Riemannian manifold $M$ with Ricci curvature ${\rm Ric}_{M}\geq (n-1)$ and the radial mean exit time $ u_{\delta,n}$ of the tube $T_{\delta}(\mathbb{S}^{n-1})\subset \mathbb{S}^{n}$,  transplanted to the tube $T_\delta(\Sigma)$  using the distance function to $\Sigma$, $s(x)={\rm dist}_{M}(x, \Sigma)$, by means the equality $$ \tilde{u}_{\delta,n}(x):=u_{\delta,n}(s(x))\,\,\forall x \in T_\delta(\Sigma)$$

In this setting, the tube radius $\delta$ is bounded above by %the quantity 
${\rm min}\{\pi/2,C(\Sigma)\}$, where $C(\Sigma)$ is the minimal focal distance of $\Sigma$, defined in Section \ref{tubes}.
This comparison leads to the same conclusion as above about the behavior of a Brownian particle that begins its random movement around a closed embedded minimal hypersurface of a compact $n$-manifold $M$ with  ${\rm Ric}_{M}\geq (n-1)$: it wanders arbitrarily close to the hypersurface for a time that approaches infinity as the dimension of the manifold does so as well. We must remark at this point that the use of these techniques does not allow obtaining upper bounds for the mean exit time function defined on the geodesic balls of such a manifold.
%\alert{
    %Perhaps this theorem can be extended for submanifolds (not only hypersurfaces) of Constant Principal Curvatures (not only minimal ones) from Galiza team.}
    \begin{theorem}\label{thmmain3} Let $M$ be a compact Riemannian $n$-manifold with Ricci curvature ${\rm Ric}_{M}\geq (n-1)$ and $\varphi \colon \Sigma \to M^n$ be a closed embedded minimal hypersurface. Let  $T_\delta(\Sigma)$ and $T_\delta(\mathbb{S}^{n-1})$ be the $\delta$-tube of $\Sigma$ and $\partial B_{\pi/2}(p_N)$ respectively, with $0<\delta< {\rm min}\{\pi/2,C(\Sigma)\}$, (see Section \ref{tubes} for the definition of the minimal focal distance $C(\Sigma)$). The mean exit time $w_{\delta,n}(x)$ of $T_{\delta}(\Sigma)$ is bounded below as 
    \begin{equation}
    \begin{aligned}w_{\delta,n}(x)\geq \tilde{u}_{\delta,n}(s(x))&\geq (F_{\delta,n}\circ s)(x)\,\,\,\,\forall x \in T_{\delta}(\Sigma)\raisepunct{.}
%\label{eq2}
\end{aligned}
\end{equation} 
    \noindent where $s(x)={\rm dist}_{M}(x, \Sigma)$, and $F_{\delta,n}$ is the function defined in Theorem \ref{thmmain}, which satisfies
    
    $$ \lim_{n\to \infty}F_{\delta,n}(r)=\infty, \,\,\,\forall r \in (0,\delta).$$
    Moreover, if  $w_{\delta,n}(x_0)= \tilde{u}_{\delta,n}(s(x_0))$ for some $x_0\in T_\delta(\Sigma)$, and some $n \in \mathbb{N}$ then
\begin{enumerate}
    \item Every tubular hypersurface  $\partial T_r(\Sigma)$ has constant mean curvature, $r\in (0, \delta)$.
    %\item The hypersurface $\Sigma$ is totally geodesic in $M$.
    \item[]
 \item The volume of $\Sigma$ is bounded from above
 $$
{\rm vol}(\Sigma)\leq \frac{1}{1-2e^{-(n-1)\delta^{2}/2}}\cdot {\rm vol}(\mathbb{S}^{n-1}).
$$
 \end{enumerate}
\end{theorem}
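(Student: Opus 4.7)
The plan is to introduce the transplanted function $\tilde u_{\delta,n}(x):=u_{\delta,n}(s(x))$ on $T_\delta(\Sigma)$ and to show that it is a subsolution of the Poisson problem defining $w_{\delta,n}$, so that the weak maximum principle forces $w_{\delta,n}\geq\tilde u_{\delta,n}$. The lower bound by $F_{\delta,n}\circ s$ is then immediate from part (i) of Theorem \ref{thmmain}. The hypothesis $\delta<C(\Sigma)$ guarantees that $s$ is smooth on $T_\delta(\Sigma)\setminus\Sigma$, and the evenness condition $u_{\delta,n}'(0)=0$ (forced by smoothness of the radial solution of \eqref{eqMET2} across $\mathbb{S}^{n-1}$) guarantees that $\tilde u_{\delta,n}$ extends $C^2$ across $\Sigma$.

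The geometric input is the Laplacian comparison $\Delta_M s\leq -(n-1)\tan(s)$ on $T_\delta(\Sigma)\setminus\Sigma$. It follows from the Bochner--Riccati inequality $(\Delta_M s)'\leq -\tfrac{(\Delta_M s)^2}{n-1}-{\rm Ric}_M(\nabla s,\nabla s)$ along the normal geodesics from $\Sigma$, with initial value $\Delta_M s\bigl|_\Sigma=0$ (the minimality of $\Sigma$), compared to the model ODE $\psi'=-\tfrac{\psi^2}{n-1}-(n-1)$, $\psi(0)=0$, whose solution is $\psi(r)=-(n-1)\tan(r)$. Using $|\nabla s|=1$, the chain rule, the sign $u_{\delta,n}'(s)\leq 0$ on $[0,\delta]$ (from applying the maximum principle to \eqref{eqMET2}), and the radial ODE $u_{\delta,n}''(r)-(n-1)\tan(r)\,u_{\delta,n}'(r)+1=0$, one computes
\begin{equation*}
\Delta_M\tilde u_{\delta,n}=u_{\delta,n}''(s)+u_{\delta,n}'(s)\,\Delta_M s\geq u_{\delta,n}''(s)-(n-1)\tan(s)\,u_{\delta,n}'(s)=-1,
\end{equation*}
so $\Delta_M(\tilde u_{\delta,n}-w_{\delta,n})\geq 0$ on $T_\delta(\Sigma)$ with zero boundary data, and the maximum principle yields $w_{\delta,n}\geq\tilde u_{\delta,n}$.

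For the rigidity, equality $w_{\delta,n}(x_0)=\tilde u_{\delta,n}(s(x_0))$ at an interior point, combined with the strong maximum principle applied to the subharmonic $\tilde u_{\delta,n}-w_{\delta,n}$, forces equality on the connected component and hence $\Delta_M\tilde u_{\delta,n}\equiv -1$. Subtracting the radial ODE gives $u_{\delta,n}'(s)\bigl[\Delta_M s+(n-1)\tan(s)\bigr]=0$; since $u_{\delta,n}'<0$ on $(0,\delta)$ (from $u_{\delta,n}'(0)=0$ and $u_{\delta,n}''(0)=-1<0$, together with the ODE), this forces $\Delta_M s=-(n-1)\tan(s)$ on $T_\delta(\Sigma)\setminus\Sigma$. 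Since $\Delta_M s$ is the mean curvature of the level set $\partial T_r(\Sigma)$ with unit normal $\nabla s$, each tubular hypersurface has constant mean curvature $-(n-1)\tan(r)$, proving (1). For (2), the first variation formula $\frac{d}{dr}{\rm vol}(\partial T_r(\Sigma))=\int_{\partial T_r(\Sigma)}\Delta_M s\,dA=-(n-1)\tan(r)\,{\rm vol}(\partial T_r(\Sigma))$ integrates to ${\rm vol}(\partial T_r(\Sigma))=2\,{\rm vol}(\Sigma)\cos^{n-1}(r)$, hence ${\rm vol}(T_\delta(\Sigma))=2\,{\rm vol}(\Sigma)\int_0^\delta\cos^{n-1}(r)\,dr$, the identical formula that holds for the equator in $\mathbb{S}^n$. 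Using Bishop's volume comparison ${\rm vol}(M)\leq{\rm vol}(\mathbb{S}^n)$ and Levy's inequality ${\rm vol}(T_\delta(\mathbb{S}^{n-1}))\geq(1-2e^{-(n-1)\delta^2/2})\,{\rm vol}(\mathbb{S}^n)$ recalled in the introduction, the ratio identity ${\rm vol}(\Sigma)/{\rm vol}(\mathbb{S}^{n-1})={\rm vol}(T_\delta(\Sigma))/{\rm vol}(T_\delta(\mathbb{S}^{n-1}))$ yields the claimed bound.

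The principal technical point is invoking the Heintze--Karcher-type Laplacian comparison for the distance to a minimal hypersurface under a Ricci lower bound, which requires verifying that $s$ is $C^2$ on the whole open tube (the role of $\delta<C(\Sigma)$) and that the Riccati initial value at $r=0$ is exactly the mean curvature of $\Sigma$ (which vanishes by minimality). Once this classical ingredient is quoted, the rest is a direct application of the weak/strong maximum principle and the standard tube-volume identity.
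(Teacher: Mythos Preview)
Your proof is correct and follows essentially the same approach as the paper: transplant $u_{\delta,n}$ via the distance function, use the mean curvature/Laplacian comparison under ${\rm Ric}_M\geq n-1$ together with $u_{\delta,n}'\leq 0$ to get $\Delta_M\tilde u_{\delta,n}\geq -1$, and apply the maximum principle; in the equality case, the strong maximum principle forces the comparison to be an equality, giving constant mean curvature tubes and the exact tube-volume formula, after which Bishop--Gromov and Levy's inequality give the volume bound on $\Sigma$. The only cosmetic differences are that the paper obtains ${\rm tr}\,\tilde S_\xi(s)\geq (n-1)\tan s$ by citing Gray's Lemma~8.28(iii) rather than deriving it from the Bochner--Riccati inequality as you do, and in the rigidity step the paper shows the ratio $\frac{d}{dr}{\rm vol}(T_r(\Sigma))\big/\frac{d}{dr}{\rm vol}(T_r(\mathbb{S}^{n-1}))$ is constant rather than integrating the first-variation ODE directly; both routes are equivalent.
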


\begin{remark}
As we have said before, Theorem \ref{thmmain3} tell us that the mean exit time function $w_{\delta,n}(x)$ becomes larger and larger as the dimension $n$ of the manifold $M$ increases. 
On the other hand, and with regard to the bound on the Ricci curvature, which is established in terms of the dimension $n$, we must take into account, when interpreting the behavior of the curvatures of the manifold when the dimension increases, that although the Ricci curvature increases with the dimension, the sectional curvatures can remain constant in any plane (as occurs with spheres or projective spaces), or in any case, bounded in any plane, from above and from below, by finite quantities. 
\end{remark}
\medskip

An analogous statement to that of Theorem \ref{thmmain3} is true for the $k^{\rm th}$-exit moments defined on $T_\delta(\Sigma)$, just as it was stablished in Corollary \ref{thmmain2} for the spherical model, as we can see in the following result. We remark here that, in order to get the comparison, we have transplanted the radial $k^{\rm th}$-exit moments defined on $T_\delta(\mathbb{S}^{n-1})$ to the tube $T_\delta(\Sigma)$ in the same way as we did in Theorem \ref{thmmain3}.

\begin{cor}\label{cor:moments}Let $M$ be a compact Riemannian $n$-manifold with Ricci curvature ${\rm Ric}_{M}\geq (n-1)$ and $\varphi \colon \Sigma \to M^n$ be a closed embedded minimal hypersurface. The  $k^{\rm th}$-exit moments $w_{\delta,k,n}$ and the transplanted $k^{\rm th}$-exit moments $\tilde{u}_{\delta,k,n}$ both defined in $T_\delta(\Sigma)$, with $0<\delta< {\rm min}\{\pi/2,C(\Sigma)\}$, satisfy 
\begin{equation}w_{\delta,k,n}(x)\geq \tilde{u}_{\delta,k,n}(s(x))\geq (F_{\delta,k,n}\circ s)(x),\,\,\,\,\forall x \in T_\delta(\Sigma).%\label{eq2}
\end{equation}
In particular, by Corollary \eqref{thmmain2} we have that $F_{\delta,k,n}: (0,\delta) \rightarrow \mathbb{R}$ is a function which satisfies  that
$$ \lim_{n\to \infty}F_{\delta,k,n}(r)=\infty, \,\,\,\forall r \in (0,\delta), \,\,\,\forall k \in \mathbb{N} $$ 
\end{cor}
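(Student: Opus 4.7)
The natural approach is induction on $k$, with Theorem \ref{thmmain3} serving as the base case $k=1$. For the inductive step, suppose we already know $w_{\delta,k-1,n}(x)\geq \tilde{u}_{\delta,k-1,n}(s(x))$ on $T_\delta(\Sigma)$, and set $\phi(x):=w_{\delta,k,n}(x)-\tilde{u}_{\delta,k,n}(s(x))$. Since both functions vanish on $\partial T_\delta(\Sigma)$, the game is to produce a differential inequality for $\phi$ that allows a maximum/minimum principle to conclude $\phi\geq 0$.

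To compute $\Delta^M\phi$, I would use that $w_{\delta,k,n}$ satisfies the Poisson hierarchy $\Delta^M w_{\delta,k,n}+k\, w_{\delta,k-1,n}=0$, while for the transplanted function the chain rule gives
\begin{equation*}
\Delta^M \tilde u_{\delta,k,n}=u''_{\delta,k,n}(s)+u'_{\delta,k,n}(s)\,\Delta^M s.
\end{equation*}
Rewriting $u''_{\delta,k,n}$ via the model equation $u''_{\delta,k,n}+u'_{\delta,k,n}\Delta^{\mathbb{S}^n}s=-k\,u_{\delta,k-1,n}$ yields
\begin{equation*}
\Delta^M\phi=-k\bigl(w_{\delta,k-1,n}-\tilde u_{\delta,k-1,n}\bigr)-u'_{\delta,k,n}(s)\bigl(\Delta^M s-\Delta^{\mathbb{S}^n}s\bigr).
\end{equation*}
The first term is $\leq 0$ by the inductive hypothesis. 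The second term is controlled by two ingredients that were already in play in the proof of Theorem \ref{thmmain3}: a Laplacian comparison $\Delta^M s\leq \Delta^{\mathbb{S}^n}s$ for the distance to a minimal hypersurface $\Sigma$ under $\mathrm{Ric}_M\geq n-1$ (this is the minimal-hypersurface version of the Heintze--Karcher/Bishop comparison), and the monotonicity $u'_{\delta,k,n}(s)\leq 0$ on $(0,\delta)$. Given these, $u'_{\delta,k,n}(s)(\Delta^M s-\Delta^{\mathbb{S}^n}s)\geq 0$, so $\Delta^M\phi\leq 0$, and the minimum principle applied to the superharmonic $\phi$ with zero boundary values gives $\phi\geq 0$ throughout $T_\delta(\Sigma)$.

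The step I expect to be most delicate is verifying $u'_{\delta,k,n}(s)\leq 0$ uniformly for all $k$. For $k=1$ this follows from the explicit radial ODE integration used in Theorem \ref{thmmain}; for general $k$ it should propagate inductively, since the radial ODE $u''_{\delta,k,n}+u'_{\delta,k,n}\,\Delta^{\mathbb{S}^n}s=-k\,u_{\delta,k-1,n}$ together with the symmetry $u'_{\delta,k,n}(0)=0$ and the boundary condition $u_{\delta,k,n}(\delta)=0$ forces $u_{\delta,k,n}$ to be a nonnegative, monotone-decreasing radial function provided $u_{\delta,k-1,n}\geq 0$ is itself radial and nonnegative. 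Once the inductive inequality $w_{\delta,k,n}\geq \tilde u_{\delta,k,n}$ is in hand, the second inequality $\tilde u_{\delta,k,n}(s(x))\geq (F_{\delta,k,n}\circ s)(x)$ is immediate from Corollary \ref{thmmain2} applied to the model tube $T_\delta(\mathbb{S}^{n-1})$ and then pulled back via $s$, and the limit $\lim_{n\to\infty}F_{\delta,k,n}(r)=\infty$ is also inherited from that corollary.
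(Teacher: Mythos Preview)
Your proposal is correct and follows essentially the same approach as the paper: induction on $k$ with Theorem~\ref{thmmain3} as the base case, computation of $\Delta^M(\tilde u_{\delta,k,n}-w_{\delta,k,n})$ via the radial formula, and application of the maximum principle using the induction hypothesis, the sign $u'_{\delta,k,n}\le 0$, and the mean-curvature (Laplacian) comparison for the distance to a minimal hypersurface. The only cosmetic difference is that you phrase the comparison as $\Delta^M s\le \Delta^{\mathbb{S}^n}s$, while the paper writes it as $(n-1)\tan s-\mathrm{tr}\,\tilde S_\xi(s)\le 0$ (citing \cite[Lemma~8.28\,iii]{gray}); and for the monotonicity $u'_{\delta,k,n}\le 0$ the paper simply reads it off from the explicit integral formula for $u_{\delta,k,n}$ rather than arguing via the ODE, but the content is identical.
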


The following result is a direct consequence of Theorem \ref{thmmain3}  and Grove-Shiohama sphere theorem.

\begin{cor}\label{teo:Shiohama}Let  $\varphi \colon \Sigma \to M^n$ be a closed embedded minimal hypersurface of a   compact Riemannian $n$-manifold $M$  with sectional curvatures  $K_M\geq 1$. The mean exit time $w_{\delta,n}(x)$ of $T_{\delta}(\Sigma)$, with $0<\delta< {\rm min}\{\pi/2,C(\Sigma)\}$, is bounded below as 
    \begin{equation}
    \begin{aligned}w_{\delta,n}(x)\geq \tilde{u}_{\delta,n}(s(x))&\geq (F_{\delta,n}\circ s)(x)\,\,\,\,\forall x \in T_{\delta}(\Sigma)\raisepunct{.}
.\label{eqcor}
\end{aligned}
\end{equation} 
    \noindent where $s(x)={\rm dist}_{M}(x, \Sigma)$, and     
    
    $$ \lim_{n\to \infty}F_{\delta,n}(r)=\infty, \,\,\,\forall r \in (0,\delta).$$

   \noindent  Moreover, if  $w_{\delta,n}(x_0)= \tilde{u}_{\delta,n}(s(x_0))$ for some $x_0\in T_\delta(\Sigma)$, and some $n \in \mathbb{N}$ and
   $$\frac{1}{2}\cdot\frac{1}{1-2e^{-(n-1)\delta^{2}/2}}\cdot {\rm vol}(\mathbb{S}^{n-1})<{\rm vol}(\Sigma)$$
   \noindent then $M$ is homeomorphic to $\mathbb{S}^n$.
\end{cor}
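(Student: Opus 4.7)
The first inequality chain, namely $w_{\delta,n}(x)\geq \tilde u_{\delta,n}(s(x))\geq (F_{\delta,n}\circ s)(x)$, is immediate: the hypothesis $K_M\geq 1$ forces $\mathrm{Ric}_M\geq (n-1)$, so Theorem \ref{thmmain3} applies verbatim, and $\lim_{n\to\infty}F_{\delta,n}(r)=\infty$ is exactly the content of Theorem \ref{thmmain}(i).

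For the rigidity assertion, the plan is to invoke the Grove--Shiohama sphere theorem. Under $K_M\geq 1$, that theorem yields that $M$ is homeomorphic to $\mathbb{S}^n$ as soon as one knows $\mathrm{diam}(M)>\pi/2$. In turn, Bishop--Gromov volume comparison under $\mathrm{Ric}_M\geq n-1$ gives $\mathrm{vol}(M)\leq \mathrm{vol}(B_{\pi/2}^{\mathbb{S}^n})=\tfrac12 \mathrm{vol}(\mathbb{S}^n)$ whenever $\mathrm{diam}(M)\leq \pi/2$. Hence the whole corollary reduces to producing the strict lower bound
$$\mathrm{vol}(M)>\tfrac12\,\mathrm{vol}(\mathbb{S}^n),$$
after which the diameter estimate follows by contradiction and Grove--Shiohama closes the argument.

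To produce that lower bound I would use the equality alternative in Theorem \ref{thmmain3}. The hypothesis $w_{\delta,n}(x_0)=\tilde u_{\delta,n}(s(x_0))$ forces every level set $\partial T_r(\Sigma)$ to have constant mean curvature, which is precisely the equality case of the Heintze--Karcher tube-volume inequality for a minimal hypersurface in a manifold with $\mathrm{Ric}_M\geq n-1$. Consequently the tube volume takes its sharp spherical value
$$\mathrm{vol}(T_\delta(\Sigma))=2\,\mathrm{vol}(\Sigma)\int_0^\delta\cos^{n-1}(t)\,dt.$$
Combining $\mathrm{vol}(M)\geq \mathrm{vol}(T_\delta(\Sigma))$ with the strict hypothesis on $\mathrm{vol}(\Sigma)$ and with Levy's isoperimetric inequality $\mathrm{vol}(T_\delta(\mathbb{S}^{n-1}))=2\,\mathrm{vol}(\mathbb{S}^{n-1})\int_0^\delta\cos^{n-1}(t)\,dt\geq \bigl(1-2e^{-(n-1)\delta^2/2}\bigr)\mathrm{vol}(\mathbb{S}^n)$ recalled in the introduction, the arithmetic telescopes down to $\mathrm{vol}(M)>\tfrac12 \mathrm{vol}(\mathbb{S}^n)$, as required.

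The main obstacle is the identification of $\mathrm{vol}(T_\delta(\Sigma))$ in the equality case of Theorem \ref{thmmain3}: one must verify that the constant mean curvature conclusion on each $\partial T_r(\Sigma)$, combined with minimality of $\Sigma$ and $\mathrm{Ric}_M\geq n-1$, really forces equality in Heintze--Karcher and hence the explicit spherical tube-volume formula above. This step should already be implicit in the derivation of Theorem \ref{thmmain3}(2), since the sharp bound on $\mathrm{vol}(\Sigma)$ obtained there comes from exactly the same identity; once that identity is in hand, everything else is routine algebra with Levy's inequality and Bishop--Gromov.
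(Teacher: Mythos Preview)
Your proposal is correct and follows essentially the same route as the paper: reduce the rigidity to Grove--Shiohama via a diameter bound, obtain that bound by contradiction through Bishop--Gromov, and feed in the tube-volume identity coming from the equality case of Theorem~\ref{thmmain3} together with Levy's inequality. The only cosmetic difference is that you name the identity $\mathrm{vol}(T_\delta(\Sigma))=2\,\mathrm{vol}(\Sigma)\int_0^\delta\cos^{n-1}(t)\,dt$ as the Heintze--Karcher equality, whereas the paper derives it directly from the ODE analysis in the proof of Theorem~\ref{thmmain3} (this is precisely equation~\eqref{eq:19}, rewritten); as you yourself note, it is already implicit in the derivation of the bound on $\mathrm{vol}(\Sigma)$ in Theorem~\ref{thmmain3}(2).
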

\bigskip

%%%%%%%%%%%%%%%%%%%%%%%%%%%%%%%%%%%%%%%%%%%
\section{Tubes around submanifolds}\label{tubes}
%%%%%%%%%%%%%%%%%%%%%%%%%%%%%%%%%
%%%%%%%%%%%%%%
 In this section we will recall the geometry of tubes around submanifolds, following the book \cite{gray}. Let $\varphi \colon \!\Sigma\to M$ be an isometric embedding    of a complete Riemannian $m$-manifold $\Sigma$  into a Riemannian $n$-manifold $M$. Let us identify $\Sigma$ with the zero section  $s\colon \Sigma \to T\Sigma^{\perp}$  of the normal bundle. The exponential map $\exp^{\perp}\colon T\Sigma^{\perp}\to M$ is a diffeomorphism when restricted to an open neighborhood $\Omega_{\Sigma}$ of $ T\Sigma^{\perp}$ containing $\Sigma$. To describe this set $\Omega_{\Sigma}$ observe that for every $q\in \exp^{\perp}(\Omega_{\Sigma})$ there exists a unique minimal geodesic  $\gamma_{\xi}\colon [0, \ell]\to M $ joining  $p=\gamma_{\xi}(0)\in \Sigma$ to $\gamma_{\xi}(\ell)=q$ with  $\gamma_{\xi}'(0)=\xi\in  \big(T_{\gamma_{\xi}(0)}\Sigma\big)^{\perp}$. The geodesic $\gamma_{\xi}$  realize the distance $\ell={\rm dist}_{_M}(\Sigma, q)$ between $q$ and $\Sigma$.
\vspace{2mm}

Letting $c\colon ST\Sigma^{\perp}=\{(p,\xi)\in T\Sigma^{\perp}\colon \Vert \xi\Vert=1\}\to \mathbb{R}$ defined as $$c(p, \xi)=\sup_{t>0}\{t\colon {\rm dist}_{_M}(p, \gamma_{\xi} (t))=t\} $$ we have that $$ \exp^{\perp}\big(\Omega_{\Sigma}\big)= \{ \exp^{\perp}(p,s\xi)=\gamma_{\xi}(s),\,\,\, (p,\xi)\in ST\Sigma^{\perp}\,,\,0\leq s\leq c(p,\xi)\}.  $$ The minimal focal distance $C(\Sigma)$ of $\Sigma$ in $M$ is defined as $$C(\Sigma)=\inf\{c(p, \xi), (p, \xi) \in ST\Sigma^{\perp}\}.$$  
\begin{definition}
Let us consider $\varphi: \Sigma^m \longrightarrow M^n$ a submanifold of a Riemannian manifold $M$, and a number  $r\leq C(\Sigma)$. The {\em tube of radius} $r$ about $\Sigma$ in $M$ is the set
\begin{equation*}
%\begin{aligned}
  T_r(\Sigma):=\left\{ x \in M\, :\, \text{dist}_M(x, \Sigma) \leq r \right\}.
%\end{aligned}
\end{equation*}

\noindent and {\em the tubular hypersurface} of radius $r$ about $\Sigma$ in $M$ is the set
\begin{equation*}
%\begin{aligned}
  \partial T_r(\Sigma):=\left\{ x \in M\, :\, \text{dist}_M(x, \Sigma) = r \right\}.
%\end{aligned}
\end{equation*}
\end{definition}
\begin{remark}
 If $M$ is complete and $\Sigma$ is closed then $C(\Sigma) >0$. Thus  we have  for all  $0\leq r\leq C(\Sigma)$ that
\begin{equation*}
%\begin{aligned}
T_r(\Sigma)=\exp^{\perp}\Big(\left\{(p, \xi)\in T\Sigma^\bot, \text{ with }\, \Vert \xi\Vert \leq r\right\}\Big) \subseteq  \exp^{\perp}\left(\Omega_\Sigma\right).
%\end{aligned}
\end{equation*}
\noindent so, as $\Sigma$ is closed, the points in the tube of radius $r$ are those whose distance to the submanifold is less than or equal to $r$, that is, those that can be reached by a minimizing geodesic that starts from the submanifold orthogonally. 

%This means that the $r$-fattening of the submanifold $\Sigma$, as defined in the Introduction, agrees with the tube of radius $r$ around $\Sigma$, namely,
%$$\Sigma_r=T_r(\Sigma)$$
\noindent On the other hand, given $(p, \xi)\in ST\Sigma^\bot$, and given $r \in (0, C(\Sigma))$, let $\gamma_\xi$ be the unique geodesic, unit speed and perpendicular to $\Sigma$ from $p \in \Sigma$ to $\gamma_\xi(r)$ which realizes the distance $dist_M(\gamma_\xi(r),\Sigma)$. Then, the tangent space in $\gamma_\xi(r)$ to the tubular hypersurface $ \partial T_r(\Sigma)$ is given by 
$$T_{\gamma_\xi(r)} \partial T_r(\Sigma)=\{\gamma_\xi'(r)\}^\bot$$
\noindent where $\gamma_\xi'(r)=\grad^M{\rm dist_M}( \cdot, \Sigma)$
\end{remark}

Given $(p, \xi) \in ST\Sigma^{\perp}$, let $$S_{\xi}(t)\colon \{\gamma_{\xi}'(r)\}^{\perp} \to \{\gamma_{\xi}'(t)\}^{\perp}\,\,\,{\rm and}\,\,\,\, R_{\xi}(r) \colon \{\gamma_{\xi}'(r)\}^{\perp} \to \{\gamma_{\xi}'(r)\}^{\perp}$$ be respectively the Shape and the Curvature operators of the hypersurface $\partial \Sigma_{r}$ at $\gamma_{\xi}(r)$ as defined in \cite{gray} using Fermi coordinates. It is known, (see \cite{gray}),  that $S_{\xi}(r)$ and $R_{\xi}(r)$ satisfies the Ricatti equation
 \[ S_{\xi}'(r)=S_{\xi}^{2}(r)+ R_{\xi}(r)\] 
 
\noindent  at every point of $\partial T_r( \Sigma)$ where $S_{\xi}'(r)=\nabla_{\!\!_{\gamma_{\xi}'(r)}}S_{\xi}(r)$.

When $m=n-1$, the eigenvalues of the shape operator $S_{\xi}(r)$, which we denote as $\{k_i(r)\}_{i=1}^{n-1}$, will satisfy the limit condition
\begin{equation}\label{limitcond1}
\lim_{r \to 0} k_i(r)=k_i\,\,\forall i=1,...,n-1\end{equation}
\noindent where $\{k_i\}_{i=1}^{n-1}$ are the eigenvalues of the Weingarten map $L_\xi$ of the hypersurface $\Sigma$.

On the other hand, when $m=0$, namely, $\Sigma$ reduces to a point, then 
\begin{equation}\label{limitcond2}
\lim_{r \to 0} k_i(r)=-\infty\,\,\forall i=1,...,n-1\end{equation}

 %and we have that 
 %$$\lim_{r \to 0} S_{\xi}(r)=L_\xi$$
 %\noindent being $L_\xi$ the Weingarten map of $\Sigma$ in $M$.
 
\noindent The mean curvature vector field of the tubular hypersurface $\partial T_r( \Sigma)$ at $\gamma_{\xi}(r)$ with respect to $\gamma_\xi'(r)=\grad^M {\rm dist_M}( \cdot, \Sigma)$ is given by
$$\overline{H}_{\partial T_{r}(\Sigma)}= -{\rm div}^{M}\big(\grad^M {\rm dist_M}( \cdot, \Sigma)\big)\cdot \grad^M {\rm dist_M}( \cdot, \Sigma)$$

 \noindent so the mean curvature pointed outward of the tubular hypersurface $\partial T_r( \Sigma)$ at $\gamma_{\xi}(r)$ with respect to $\gamma_\xi'(r)=\grad^M {\rm dist_M}( \cdot, \Sigma)$ is given by $$H(\gamma_{\xi}(t))=\langle\overline{H}_{\partial T_{r}(\Sigma)}, \grad^M {\rm dist_M}( \cdot, \Sigma)\rangle=\-{\rm tr}\,S_{\xi}(t).$$ 

Let us consider a smooth function $u\colon T_r(\Sigma)\to \mathbb{R}$ defined on the tube of radius $r$ around $\Sigma$ and let $l\colon \Sigma_r \to [0, r]$ be defined by $l(x)={\rm dist_M}(x, \Sigma)$. The Laplacian of $u$ in $T_r(\Sigma)$ is expressed in Fermi coordinates by $$ \triangle_{_M} u (l,p)=\frac{\partial^{2} u}{\partial l^2}(l,p)-{\rm tr}S_{\xi}(l)\frac{\partial u}{\partial l}(l,p)+ \triangle_{_{\partial \Sigma_l}}u(l,p)$$ 
\noindent If $u$ depends only on the distance to $\Sigma$ we say that $u$ is said radial function and 

\begin{equation}\label{eqradial} \triangle_{_M} u (l)=\frac{\partial^{2} u}{\partial l^2}(l)-{\rm tr}S_{\xi}(l)\frac{\partial u}{\partial l}(l).\end{equation}

%%%%%%%%%%%%%%%%%%%%%%%%%%%%%%%%%%%
%%%%%%%%%%%%%%%%%%%%%%%%%%%%%%%%%
\section{Proof of Theorem \ref{thmmain}}
%%%%%%%%%%%%%%%%%%%%%%
%%%%%%%%%%%%%%%%%%%%%%%%

In the following Proposition, we give an explicit description of the solutions of the problems \eqref{eqMET2} and \eqref{eqMET3}.

\begin{prop}\label{meanexitmodel}
    The solutions  of the Poisson equations \eqref{eqMET2} and \eqref{eqMET3}, defined, respectively, on  $T_{\delta}(\mathbb{S}^{n-1})$, and on $B_{\pi/2 - \delta}(p_N)$, with $0< \delta < \pi/2$, are given by the following radial functions, depending, respectively, on $s=s(x)={\rm dist}_{\mathbb{S}^{n}}( x, \mathbb{S}^{n-1})$ and on $t(x)={\rm dist}_{\mathbb{S}^{n}}(x, p_N)$.

     \begin{equation}\label{solutionmodel1}
       % \begin{aligned}
       u_{\delta}(s)=\int_{s}^{\delta}\frac{\int_{0}^{\tau} \cos^{n-1}(l)dl}{\cos^{n-1}(\tau)}d\tau,\,\,\, s\in [0, \delta] 
       %\end{aligned}
    \end{equation}
    
    \begin{equation}\label{solutionmodel2}
      %  \begin{aligned}
        v_{\delta}(t)=\int_{t}^{\pi/2-\delta} \frac{\int_{0}^{\tau} \sin^{n-1}(l)dl}{\sin^{n-1}(\tau)}d\tau,\,\,\, t\in [0, \pi/2-\delta] 
      %  \end{aligned}
    \end{equation}
    
\end{prop}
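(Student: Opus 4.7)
The plan is to convert each Poisson problem into an ODE using the radial Laplacian formula \eqref{eqradial} of Section \ref{tubes}, recast it in Sturm--Liouville form, and integrate twice.

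First, I compute $\mathrm{tr}\,S_{\xi}(r)$ on the level sets in both cases. Since $\mathbb{S}^{n-1}\subset\mathbb{S}^{n}$ is totally geodesic, the standard Fermi-coordinate description of the metric on $T_{\delta}(\mathbb{S}^{n-1})$ is the warped product $ds^{2}+\cos^{2}(s)\,g_{\mathbb{S}^{n-1}}$, so the volume density along the normal geodesic is $J(s)=\cos^{n-1}(s)$ and $\mathrm{tr}\,S_{\xi}(s)=-J'(s)/J(s)=(n-1)\tan(s)$. For a point (the degenerate case $m=0$), the geodesic ball $B_{\pi/2-\delta}(p_N)$ carries the metric $dt^{2}+\sin^{2}(t)\,g_{\mathbb{S}^{n-1}}$, giving $J(t)=\sin^{n-1}(t)$ and $\mathrm{tr}\,S_{\xi}(t)=-(n-1)\cot(t)$.

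Substituting into \eqref{eqradial}, the two Poisson problems take the Sturm--Liouville form
$$\bigl(\cos^{n-1}(s)\,u_{\delta,n}'(s)\bigr)'=-\cos^{n-1}(s),\qquad \bigl(\sin^{n-1}(t)\,v_{\delta,n}'(t)\bigr)'=-\sin^{n-1}(t).$$
A first integration kills the constant in each case: for the tube I use reflection symmetry across the totally geodesic equator, which forces $u_{\delta,n}'(0)=0$; for the ball I use smoothness of $v_{\delta,n}$ at $p_N$ together with $\sin^{n-1}(0)=0$, which forces $\lim_{t\to 0^{+}}\sin^{n-1}(t)\,v_{\delta,n}'(t)=0$. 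This yields
$$u_{\delta,n}'(s)=-\frac{\int_{0}^{s}\cos^{n-1}(l)\,dl}{\cos^{n-1}(s)},\qquad v_{\delta,n}'(t)=-\frac{\int_{0}^{t}\sin^{n-1}(l)\,dl}{\sin^{n-1}(t)}.$$

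A second integration on $[s,\delta]$ and on $[t,\pi/2-\delta]$, together with the Dirichlet boundary conditions $u_{\delta,n}(\delta)=0$ and $v_{\delta,n}(\pi/2-\delta)=0$, produces the claimed expressions \eqref{solutionmodel1} and \eqref{solutionmodel2}. The only step that is not pure calculation is the computation of $\mathrm{tr}\,S_{\xi}$ in Fermi coordinates, which amounts to reading off the warped-product structure, and the vanishing of the integration constant, which is immediate from symmetry (respectively from regularity at the pole). I expect no genuine obstacle beyond keeping track of sign conventions in the radial Laplacian.
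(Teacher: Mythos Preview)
Your proposal is correct and follows essentially the same approach as the paper: both reduce the Poisson problems to the radial ODE via the formula \eqref{eqradial} after computing $\mathrm{tr}\,S_{\xi}$, and then solve. The only cosmetic differences are that the paper obtains $\mathrm{tr}\,S_{\xi}=(n-1)\tan s$ and $\mathrm{tr}\,S_{\xi}=-(n-1)\cot t$ by solving the Riccati equation for the principal curvatures (with initial data \eqref{limitcond1}, \eqref{limitcond2}) rather than reading off the warped-product density, and it verifies the given formulas by direct differentiation rather than deriving them by two integrations; your handling of the integration constants via symmetry at the equator and regularity at the pole is a clean addition that the paper leaves implicit.
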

\begin{proof}
We can find the computation of $v_\delta(t)$ in \cite{HMP12}. For the sake of completeness, we will present the details of the computations of both functions. Let us start with $u_\delta$.  We have to show that \begin{equation} \label{eqL}  
\triangle_{_{\mathbb{S}^{n}}}u_{\delta} (s)=\frac{\partial^{2} u_{\delta}}{\partial s^2}(s)-{\rm tr}\,S(s)\frac{\partial u_{\delta}}{\partial s}(s)=-1. 
\end{equation} 

The Shape operator $S_{\xi}(s)$ satisfies the equation \begin{equation}\label{eqRicatti} S_{\xi}'(s)=S_{\xi}^2(s)+R_{\xi}(s)\end{equation}. In the sphere $S^n$ the curvature operator $R_{\xi}={\rm Id}_{n-1}$. Let $\gamma_{\xi}$ be the unit geodesic normal to $\Sigma$ at $\gamma_{\xi}(0)=p$, $\gamma_{\xi}'(0)=\xi$ and let $\{ e_1(s), \ldots, e_{n-1}(s)\}$ be a parallel  ortonormal  basis along $\gamma_{\xi}$ that  diagonalizes the shape operator $S_{\xi}(s)$, this is \begin{equation}S_{\xi}(s)e_i(s)=k_{i}(s) e_{i}(s)\label{eqS}\end{equation} for $i=1, \ldots n-1.$ Here $k_i(s)$ are the principal curvatures of the tubular hypersurface $\partial T_s(\mathbb{S}^{n-1})$, and, as we have explained in Section \ref{tubes}, equation \eqref{limitcond1}, $k_i(0)=0\,\,\forall i=1,...,n-1$, because $\mathbb{S}^{n-1}$ is totally geodesic in $\mathbb{S}^{n}$.

Hence, from \eqref{eqRicatti}, \eqref{eqS} and the fact that the immersion is totally geodesic, we have, (see \cite[Cor. 3.5]{gray}),

$$\left\{\begin{array}{lll}k_{i}'(s)&=&k_{i}^{2}(s)+1\\
k_i(0)&=&0\end{array}\right.,$$ 
\noindent  Hence, $${\rm tr}\, S_{\xi}(s)=(n-1) \tan s$$

 The equation \eqref{eqL} becomes $$\triangle_{\mathbb{S}^{n}}u_{\delta} (s)=\frac{\partial^{2} u_{\delta}}{\partial s^2}(s)-(n-1) \tan s\,\frac{\partial u_{\delta}}{\partial s}(s)=-1 $$ 
 \noindent and clearly the function $u_{\delta}$ in \eqref{solutionmodel1} satisfies $\triangle_{\mathbb{S}^{n}}u_{\delta} (s) =-1$ with $u_{\delta} (\delta)=0$. 
 
 On the other hand, to compute $v_\delta$, we must have into account now that, as the submanifold we are considering now  is the point $p_N$, the level hypersurfaces of the distance function ${\rm dist}_{\mathbb{S}^{n}}(., p_N)$ are now geodesic spheres in $\mathbb{S}^{n}$ , so, when we take the unit normal pointing to the center $p_N$,  the eigenvalues of the shape operator satisfy the equations, (see again \cite[Cor. 3.5]{gray} and equation \eqref{limitcond2} in section \ref{tubes}), 
 
 $$\left\{\begin{array}{lll}k_{i}'(t)&=&k_{i}^{2}(t)+1\\
k_i(0)&=&-\infty\end{array}\right.,$$

\noindent Hence, $${\rm tr}\, S_{\xi}(t)=-(n-1) \cot t$$
 
 \noindent so the solution $v_\delta(t)$ of \eqref{eqMET3} is given, doing the same computations than above, by $$v_{\delta}(t_x)=\int_{t}^{\pi/2-\delta} \frac{\int_{0}^{\tau} \sin^{n-1}(s)ds}{\sin^{n-1}(\tau)}d\tau$$ 
 \end{proof}
 
 Now, to finish the proof of the Theorem, we are going to establish the behavior of these functions when the dimension increases.
 
 The function $\cos^{n-1}(s)$ is decreasing for $s\in (0, \pi/2)$, so therefore, for each $s=s(x)={\rm dist}(x, \mathbb{S}^{n-1})$, with $x \in T_\delta(\mathbb{S}^{n-1})$,
\begin{eqnarray} u_{\delta} (s)&=&\int_{s}^{\delta}\frac{\int_{0}^{\tau}\cos^{n-1}(l)dl}{\cos^{n-1}(\tau)}d\tau\nonumber \\ &\geq & \frac{1}{\cos^{n-1}(s)}\int_{s}^{\delta}\int_{0}^{\tau}\cos^{n-1}(l)dld\tau\nonumber \\
&\geq & \frac{1}{\cos^{n-1}(s)\sin (\delta)}\int_{s}^{\delta}\int_{0}^{\tau}\cos^{n-1}(l)\sin(l)dld\tau\nonumber \\
&=&\frac{1}{\cos^{n-1}(s)\sin (\delta)}\int_{s}^{\delta}\frac{-\cos^{n}(l)}{n}\vert_{0}^{\tau}d\tau\nonumber \\
&=& \frac{1}{\cos^{n-1}(s)\sin (\delta)}\int_{s}^{\delta}\frac{1-\cos^{n}(\tau)}{n}d\tau\nonumber \\
&\geq & \frac{1}{\cos^{n-1}(s)\sin (\delta)}\frac{1-\cos^{n}(s)}{n}(\delta-s)\nonumber =F_{\delta, n}(s)
\end{eqnarray}

Now, we note that, given $s \in (0,\pi/2)$, we have 
 $$\lim_{n \to \infty}n\cos^{n-1}(s)=\lim_{n \to \infty} n \cos^{n}(s) =\lim_{n \to \infty} e^{ n(\frac{\ln n}{n}+\ln(\cos s))}= 0$$ 
 \noindent To see the last equality, let us observe that, as $0 < \cos(s) < 1\,\,\forall s \in (0, \pi/2)$, then $\ln(\cos(s))<0\,\,\forall s \in (0, \pi/2)$.
 
 Hence, as, on the other hand, $\lim_{n \to \infty} \frac{\ln n}{n}=0$, then, for all $s \in (0,\pi/2)$, 
 $$\lim_{n \to \infty} \frac{\ln n}{n}+\ln(\cos(s))<0$$
 \noindent so
 $$\lim_{n \to \infty} e^{n(\frac{\ln n}{n}+\ln(\cos s))}= 0$$ 

\noindent Therefore, we conclude that
\begin{equation}\nonumber
\begin{aligned}
\lim_{n \to \infty} F_{\delta,n}(s)&=\lim_{n\to \infty}\frac{(\delta-s)}{\cos^{n-1}(s)\sin (\delta)}\frac{1-\cos^{n}(s)}{n}\\&=\frac{(\delta -s)}{\sin \delta}\lim_{n\to \infty}\frac{1-\cos^{n}(s)}{n\cos^{n-1}(s)}
= \infty
\end{aligned}
\end{equation}

\medskip

 On the other hand, when we consider $v_\delta(t(x))$, the mean exit time form the geodesic ball $B_{\pi/2 - \delta}(p_N)$, we have that, denoting as $t=t(x)$ with $x \in B_{\pi/2 - \delta}(p_N)$,
 %centered at the point $p_N \in \mathbb{S}^{n}$ such that $s(p_N)={\rm dist}_{\mathbb{S}^{n}}(p_N, \mathbb{S}^{n-1})=\pi/2$,then, given $t=t(x)$, we have

\begin{equation}\nonumber
\begin{aligned}
 v_{\delta}(t)&= \int_{t}^{\pi/2-\delta} \frac{\int_{0}^{\tau} \sin^{n-1}(l)dl}{\sin^{n-1}(\tau)}d\tau \\& \leq\nonumber  \frac{1}{\cos(\pi/2 - \delta)}  \int_{t}^{\pi/2-\delta} \frac{\int_{0}^{\tau} \sin^{n-1}(l)\cos (l)dl}{\sin^{n-1}(\tau)}d\tau
  \\&= \frac{1}{n\cos(\pi/2 - \delta)} (\cos t-\cos(\pi/2-\delta))\nonumber \\& \leq \frac{\cos t}{n\cos(\pi/2 - \delta)} (1-\cos(\pi/2-\delta)) 
\end{aligned}
\end{equation}

Therefore, for all $x \in B_{\pi/2 - \delta}(p_N)$ with $t(x)={\rm dist}_{\mathbb{S}^{n}}(x, p_N) \leq \pi/2-\delta$,  we obtain
$$\lim_{n\to \infty} v_{\delta}(t(x))=0.$$

%%%%%%%%%%%%%%%%%%%%%%%%%%%%%%%%%%%%%%
%%%%%%%%%%%%%%%%%%%%%%%%%%%%%
\subsection{Proof of Corollary \ref{thmmain2}}\
%%%%%%%%%%%%%%%%%%%%%%%%%%%
%%%%%%%%%%%%%%%%%%%%%%%%

The $k^{\rm th}$-exit moments $u_{k,\Omega,n}(x)$ of a bounded domain $\Omega$ with boundary $\partial \Omega \neq \emptyset$ satisfies the following recurrent family of  Poisson problems, where  $u_{0,\Omega,n}=1$, 
\begin{equation}\label{eqMoment}
    \left\{\begin{array}{rl}
     \triangle^M u_{k,\Omega,n}+ k u_{k-1,\Omega, n} = 0    & {\rm in}\,\,\,\, \Omega  \\
         u_{k,\Omega,n}=0& {\rm on}\,\,\partial \Omega 
    \end{array}\right.
\end{equation}

 If $\Omega=T_\delta(\mathbb{S}^{n-1})\subset \mathbb{S}^{n}$, it is straightforward to check that the solution of  \eqref{eqMoment} is given by
  \[ u_{\delta,k,n}(x)=k\int_{s(x)}^{\delta}\frac{1}{\cos^{n-1}(\tau)}\int_{0}^{\tau}\cos^{n-1}(\xi)u_{\delta,k-1,n}(\xi)d\xi d\tau\] 
and if $\Omega =B_{(\pi/2-\delta)}(p_N)$, the solution of  \eqref{eqMoment} is given by, (see \cite{HMP12})
\[ v_{\delta,k,n}(x)=k\int_{s(x)}^{\pi/2-\delta}\frac{1}{\sin^{n-1}(\tau)}\int_{0}^{\tau}\sin^{n-1}(\xi)u_{\delta,k-1,n}(\xi)d\xi d\tau\]

We are going to prove assertion (i) in Corollary \ref{thmmain2} by induction on $k$, having into account that, from its definition, it is easy to check that the functions $u_{\delta,k,n}(s)$ are non-negative and non-increasing for all $k \geq 1$. For $k=1$, Theorem \eqref{thmmain} states that the first exit moment $u_{\delta,1,n}(x)=u_{\delta,1,n}(s(x))$ is bounded from below by the composition $$(F_{\delta,1,n}\circ s)(x)=F_{\delta,n}(s(x))= \frac{(\delta-s(x))}{\sin(\delta)}\cdot\frac{1-\cos^{n}(s(x))}{n\cos^{n-1}(s(x))}\,\,\,\,\forall x \in T_{\delta}(\mathbb{S}^{n-1})$$
\noindent where $F_{\delta,n}: (0,\delta) \rightarrow \mathbb{R}$ is the function defined as
$$F_{\delta,n}(r)=\frac{(\delta-r)}{\sin(\delta)}\cdot\frac{1-\cos^{n}(r)}{n\cos^{n-1}(r)}\raisepunct{.} $$
\noindent which satisfies  that
$$ \lim_{n\to \infty}F_{\delta,n}(r)=\infty \,\,\,\forall r \in (0,\delta)$$

Suppose that there exists a function $F_{\delta,k-1,n}: (0,\delta) \rightarrow \mathbb{R}$ such that $$u_{\delta,k-1,n}(s(x)) \geq F_{\delta,k-1,n}(s(x)) \,\,\,\,\forall x \in T_{\delta}(\mathbb{S}^{n-1})$$ and which satisfies  that
$$ \lim_{n\to \infty}F_{\delta,k-1,n}(r)=\infty \,\,\,\forall r \in (0,\delta)$$

Fix $x\in T_\delta(\mathbb{S}^{n-1})$. As $u_{\delta,k-1,n}(s)$ is non-negative and non-increasing, we have, taking $\epsilon(x)=\frac{\delta-s(x)}{2} >0$, (so $s(x)<s(x)+\epsilon(x)=\frac{s(x)+\delta}{2} < \delta$), that 
$$u_{\delta,k-1,n}(\xi) \geq u_{\delta,k-1,n}(\tau)\,\,\forall \xi \in (0,\tau)$$
\noindent and 
$$u_{\delta,k-1,n}(\tau) \geq u_{\delta,k-1,n}(\frac{s(x)+\delta}{2}) \,\,\forall \tau \in (s(x),\frac{s(x)+\delta}{2}),$$ 
\noindent so we conclude, using the induction hypothesis, th fact that the functions $u_{\delta,k,n}$ are non-negative and non-increasing for all $k \geq 1$ and inequalities above:

\begin{eqnarray} u_{\delta,k,n}(x) &= & k\int_{s(x)}^{\delta}\frac{1}{\cos^{n-1}(\tau)}\int_{0}^{\tau}\cos^{n-1}(\xi)u_{\delta,k-1,n}(\xi)d\xi d\tau\nonumber \\
&& \nonumber \\
&\geq& k\int_{s(x)}^{\frac{s(x)+\delta}{2}}\frac{1}{\cos^{n-1}(\tau)}\int_{0}^{\tau}\cos^{n-1}(\xi)u_{\delta,k-1,n}(\xi)d\xi d\tau \nonumber  \\
&& \nonumber \\
&\geq & k \cdot u_{\delta,k-1,n}(\frac{s(x)+\delta}{2})\cdot \frac{\delta-s(x)}{2\cos^{n-1}(s(x))\sin(\frac{s(x)+\delta}{2})}\cdot \frac{1-\cos^{n}(s(x))}{n}\nonumber \\
&\geq & k \cdot F_{\delta,k-1,n}(\frac{s(x)+\delta}{2})\cdot \frac{\delta-s(x)}{2\cos^{n-1}(s(x))\sin(\frac{s(x)+\delta}{2})}\cdot \frac{1-\cos^{n}(s(x))}{n}\nonumber 
\end{eqnarray}

Hence, if we define, given $r \in (0,\delta)$:
$$F_{\delta,k,n}(r):=k \cdot F_{\delta,k-1,n}(\frac{r+\delta}{2})\cdot \frac{\delta-r}{2\cos^{n-1}(r)\sin(\frac{r+\delta}{2})}\cdot \frac{1-\cos^{n}(r)}{n}$$
\noindent we have proved that, for all $x \in T_\delta(\mathbb{S}^{n-1})$,
$$u_{\delta,k,n}(s(x)) \geq F_{\delta,k,n}(s(x))$$
\noindent and, moreover, we have that 
$$\lim_{n\to \infty}F_{\delta,k,n}(r)=\infty \,\,\,\forall r \in (0,\delta)$$
\noindent To see this last conclusion, we argue as follows:
Given $0<r <\delta<\pi/2$, since  $\lim_{n \to \infty}(n\cos^{n-1}(r))=0$,  then, in an analogous way than in the proof of Theorem \ref{thmmain}, we have that  
$$\lim_{n\to \infty} k \cdot \frac{\delta-r}{2\cos^{n-1}(r)\sin(\frac{r+\delta}{2})}\cdot \frac{1-\cos^{n}(r)}{n}= \infty$$
\noindent As, on the other hand, we have, by induction hypothesis, that, for all $r \in (0,\delta)$,
$$\lim_{n \to \infty} F_{\delta,k-1,n}(\frac{r+\delta}{2})=\infty$$
\noindent  we get the desired equality.

\noindent The proof of the bound $$ v_{\delta,k,n}(s(x)) \leq k! \left(v_{\delta,1,n}(0)\right)^k $$ \noindent is straightforward and it can be found in p. 6 of  \cite{bmp}. Moreover, it is evident that $\lim_{n \to \infty} v_{\delta,1,n}(s(x))=0\,\,\forall x \in B_{(\pi/2-\delta)}(N)$.

%%%%%%%%%%%%%%%%%%%%%%%%%%
%%%%%%%%%%%%%%%%%%%%%
\section{Proof of Theorem  \ref{thmmain3}}
%%%%%%%%%%%%%%%%%%%%%%%%%%%%%%
%%%%%%%%%%%%%%%%%%%%%

Let $\varphi \colon \Sigma \to M$ a closed embedded minimal hypersurface, where $M$ is a closed Riemannian $n$-manifold with Ricci curvature ${\rm Ric}_{M}\geq (n-1)$. Let $T_\delta(\Sigma)$, with $\delta \in (0, {\rm min}\{\pi/2, C(\Sigma)\})$, be the $\delta$-tubular neighbourhood around $\Sigma$ and $w_{\delta}(x)=\mathbb{E}[\tau_{_{\Sigma_{\delta}}}^x]$ the mean exit time function defined on $T_\delta(\Sigma)$. We know that $w_\delta$ is the solution of the Poisson's problem

\begin{equation}\label{eqMET4}
    \left\{\begin{array}{rl}
         \triangle_{_M} w_{\delta}+1=0& \!{\rm in}\,\,T_{\delta}(\Sigma)  \\
         w_{\delta}=0&\!\!{\rm on} \, \,\partial T_{\delta}(\Sigma).
    \end{array}\right.
\end{equation}

Taking Fermi coordinates $(s, p)$, where $s= s_M(x)={\rm dist}_{M}(x, \Sigma)$ in the tube  $T_\delta(\Sigma)$, we have that the Laplacian of $w_{\delta}$ in $T_\delta(\Sigma)$ is expressed by 
$$ \triangle_{_M} w_{\delta} (s,p)=\frac{\partial^{2} w_\delta}{\partial s^2}(s,p)-{\rm tr}\widetilde{S}_{\xi}(s)\frac{\partial w_\delta}{\partial s}(s,p)+ \triangle_{\partial T_s(\Sigma}w_\delta(s,p)$$
\noindent where the mean curvature pointed outward of the tubular neighborhood $\partial T_r( \Sigma)$ at $\gamma_{\xi}(s)$ with respect to $\grad^M {\rm dist}_M( \cdot, \Sigma)$ is given by $$H(\gamma_{\xi}(s))=\langle \overline{H}_{\partial T_{s}(\Sigma)}, \grad^M {\rm dist}_M( \cdot, \Sigma)\rangle={\rm tr}\,\tilde{S}_{\xi}(s).$$ 

Let us consider now the tube $T_\delta(\mathbb{S}^{n-1})$ around the totally geodesic $\mathbb{S}^{n-1}$ in the sphere $\mathbb{S}^{n}$ and $u_{\delta}$ be its mean exit time function. We have seen in the proof of Theorem \eqref{thmmain} that the function $u_{\delta}$ depends only on the distance $s={\rm dist}_{\mathbb{S}^{n}}(\cdot, \mathbb{S}^{n-1})$ and is given by equation \eqref{solutionmodel1}.

Transplant $u_{\delta}$ to $T_\delta(\Sigma)$ by defining $\tilde{u}_{\delta}\colon T_\delta(\Sigma) \to [0, \infty)$ as $\tilde{u}_{\delta}(x)= u_{\delta}(s_M(x))$ for all $x\in T_\delta(\Sigma)$, where $s_M(x)={\rm dist}_{M}(x, \Sigma).$ Note that $\frac{d}{ds}\tilde{u}_{\delta}= \frac{d}{ds}u_{\delta}$ and $\frac{d^2}{ds^2}\tilde{u}_{\delta}= \frac{d^2}{ds^2}u_{\delta}$ We calculate now $\triangle_{_M}\tilde{u}_{\delta}$, having into account that $\tilde{u}_{\delta}$ is a radial function,  using equation \eqref{eqradial}, (with the parameter $s$), so we obtain

\begin{equation}
\begin{aligned}
\triangle_{_M}\tilde{u}_{\delta}(s_M(x))&= \frac{\partial^{2}\tilde{u}_{\delta}}{\partial s^2}(s)-{\rm tr}\tilde{S}_{\xi}(s)\frac{\partial \tilde{u}_{\delta}}{\partial s}(s)\\
&= \frac{\partial^{2} u_{\delta}}{\partial s^2}(s)-{\rm tr}\tilde{S}_{\xi}(s)\frac{\partial u_{\delta }}{\partial s}(s)\\
&= \frac{\partial^{2} u_{\delta}}{\partial s^2}(s)-(n-1)\tan (s)\frac{\partial u_{\delta}}{\partial s}(s) \\&+ (n-1)\tan (s)\frac{\partial u_{\delta }}{\partial s}(s) -{\rm tr} \tilde{S}_{\xi}(s)\frac{\partial u_{\delta }}{\partial s}(s)\\
&= \triangle_{\mathbb{S}^{n}} u_{\delta}(t)+\left[ (n-1)\tan (t)-{\rm tr} \tilde{S}_{\xi}(t)\right]\frac{\partial u_{\delta }}{\partial s}(s)\\
&= -1 + \left[ (n-1)\tan (t)-{\rm tr} \tilde{S}_{\xi}(t)\right]\frac{\partial u_{\delta }}{\partial s}(s)\\
&=\triangle_{_M}w_{\delta}(x) + \left[ (n-1)\tan (t)-{\rm tr} \tilde{S}_{\xi}(t)\right]\frac{\partial u_{\delta }}{\partial s}(s)
\end{aligned}
\end{equation} 
Therefore $\triangle_{_M}\left(\tilde{u}_{\delta}-w_{\delta}\right)(x)=\left[ (n-1)\tan (t)-{\rm tr} \tilde{S}_{\xi}(t)\right]\displaystyle\frac{\partial u_{\delta }}{\partial s}(s(x))$.
 We can check directly that $\displaystyle\frac{\partial u_{\delta} }{\partial s}(s)\leq 0,\,\,\forall s$ and, by \cite[lemma 8.28 iii.]{gray} that $$(n-1)\tan (s)-{\rm tr}\tilde{S}_{\xi}(s)\leq 0\,\,\forall s.$$ Thus $\triangle_{_M}\tilde{u}_{\delta}(x)\geq -1=\triangle_{_M}w_{\delta}(x)\,\,\forall x \in T_\delta(\Sigma)$. Since $\tilde{u}_{\delta}(x)=w_{\delta}(x)=0$ for $x \in \partial T_\delta(\Sigma)$ this implies, by the Maximum principle, that $\tilde{u}_{\delta}(x)\leq w_{\delta}(x)$ for all $x\in T_\delta(\Sigma)$. 
 \medskip

To discuss the equality case we procceed as follows. If $ \tilde{u}_{\delta}(x_0)=w_{\delta}(x_0)$, since 
 the function $h(x)=\tilde{u}_{\delta}(x)- w_{\delta}(x)\leq 0$ is subharmonic with $h(x_0)=0$ we have  $h\equiv 0$ or equivalently
 $(n-1)\tan (r)={\rm tr}\tilde{S}(r)$ for all $r\in [0, \delta)$. Therefore, we have that $\partial T_r(\Sigma)$ has constant mean curvature $\tan r$ for all $r \in (0,\delta)$ and we have proved assertion (1).
 On the other hand, as 
 $$
\frac{d}{dr}{\rm vol}(T_r(\Sigma))={\rm vol}(\partial T_r(\Sigma)),\quad \frac{d^2}{dr^2}{\rm vol}(T_r(\Sigma))=-\int_{\partial T_r(\Sigma)}{\rm tr}\tilde{S}(r)dV
 $$
we conclude that
$$
\begin{aligned}
    \frac{d^2}{dr^2}{\rm vol}(T_r(\Sigma))&=-(n-1)\tan(r)\frac{d}{dr}{\rm vol}(T_r(\Sigma))\\
    &=\frac{\frac{d^2}{dr^2}{\rm vol}(T_r(\mathbb{S}^{n-1}))}{\frac{d}{dr}{\rm vol}(T_r(\mathbb{S}^{n-1}))}\frac{d}{dr}{\rm vol}(T_r(\Sigma)).
\end{aligned}
$$
This implies that the function 
$$
r\mapsto \frac{\frac{d}{dr}{\rm vol}(T_r(\Sigma))}{\frac{d}{dr}{\rm vol}(T_r(\mathbb{S}^{n-1}))}=\frac{{\rm vol}(\partial T_r(\Sigma))}{{\rm vol}(\partial T_r( \mathbb{S}^{n-1}))}=F(r)
$$
is a constant function, so we have, as $\partial T_r(\Sigma)=\{x \in M/{\rm dist}_M(x,\Sigma)=r\}$ and $\partial T_r(\mathbb{S}^{n-1})=\{\tilde{x} \in \mathbb{S}^{n}/{\rm dist}_{\mathbb{S}^{n}}(x,\mathbb{S}^{n-1})=r\}$, that $\partial T_0(\Sigma)=\Sigma$ and $\partial T_0(\mathbb{S}^{n-1})=\mathbb{S}^{n-1}$, and hence
$$\lim_{r \to 0}F(r)= \frac{{\rm vol}(\Sigma)}{{\rm vol}(\mathbb{S}^{n-1})}$$

\noindent  Therefore
$$
\frac{d}{dr}{\rm vol}(T_r(\Sigma))=\frac{{\rm vol}(\Sigma)}{{\rm vol}(\mathbb{S}^{n-1})}\frac{d}{dr}{\rm vol}(T_r(\mathbb{S}^{n-1}))
$$
By integration and by Bishop-Gromov volume comparison theorem we can state that
\begin{equation}\label{eq:19}
{\rm vol}(\mathbb{S}^n)\geq {\rm vol(M)}\geq {\rm vol}(T_\delta(\Sigma))=\frac{{\rm vol}(\Sigma)}{{\rm vol}(\mathbb{S}^{n-1})}{\rm vol}(T_\delta(\mathbb{S}^{n-1})).    
\end{equation}
 Hence, by using the concentration of measure on the sphere for the tube $T_\delta(\mathbb{S}^{n-1})$, namely, 
$${\rm vol}(T_\delta(\mathbb{S}^{n-1})\geq (1-2e^{-(n-1)\delta^{2}/2} ){\rm vol}(\mathbb{S}^{n}),$$
\noindent we conclude that
\begin{equation}\label{eq:cor}
    {\rm vol}(\mathbb{S}^n)\geq {\rm vol}(M)\geq {\rm vol}(T_\delta(\Sigma))\geq\frac{{\rm vol}(\Sigma)}{{\rm vol}(\mathbb{S}^{n-1})}(1-2e^{-(n-1)\delta^{2}/2} ){\rm vol}(\mathbb{S}^{n}),
\end{equation}
and hence
$$
{\rm vol}(\Sigma)\leq \frac{1}{1-2e^{-(n-1)\delta^{2}/2}}\cdot {\rm vol}(\mathbb{S}^{n-1}).
$$
%%%%%%%%%%%%%%%%%%%%%%%%%%
\subsection{Proof of Corollary \ref{cor:moments}}\
%%%%%%%%%%%%%%%%%%%%%%%%%%%

    The $k^{\rm th}$-exit moment functions $u_{\delta,k,n}(\tilde{x})=u_{\delta,k,n}(s_{\mathbb{S}^{n}}(\tilde{x}))$, $k\geq 1$, are radial and satisfy the family of  Poisson problems on $T_{\delta}(\mathbb{S}^{n-1})$, $u_{\delta,0,n}=1$, 
    
    \begin{equation}\left\{\begin{array}{rl}\triangle_{\mathbb{S}^{n}}u_{\delta}^{k}+k u_{\delta}^{k-1}=0&{\rm in}\,\, T_{\delta}(\mathbb{S}^{n-1})\\
    &\\
    u_{\delta}^{k}=0& {\rm on}\,\,\, \partial T_{\delta}(\mathbb{S}^{n-1})
    \end{array}\right.\label{eq15}
         \end{equation}

         These problems can be solved explicitly yielding 
         
         $$u_{\delta,k,n}(s(x))=k\int_{s(x)}^{\delta}\frac{1}{\cos^{n-1}(\tau)}\int_{0}^{\tau}\cos^{n-1}(\xi)u_{\delta,k-1,n}(\xi)d\xi d\tau.$$ 
         
         \noindent Observe that 
         
         $$\displaystyle\frac{\partial \, u_{\delta,k,n}}{\partial s}(s)=-k\frac{1}{\cos^{n-1}(s)}\int_{0}^{\tau}\cos^{n-1}(\xi)u_{\delta,k-1,n}(\xi)d\xi \leq 0$$ 
         \noindent since $u_{\delta,k-1,n}\geq 0$, for all $k \geq 1$. Then, the functions $u_{\delta,k,n}$ are non-increasing, for all $k \geq 1$. On the other hand, the $k^{\rm th}$-exit moment functions $w_{\delta,k,n}(x)$, $k\geq 1$, satisfy a family of  Poisson problems on $T_{\delta}(\Sigma)$ as well, with $w_{\delta,0,n}=1$,
         \begin{equation}\left\{\begin{array}{rl}\triangle_{_M}w_{\delta,k,n}+k w_{\delta,k-1,n}=0&{\rm in}\,\, T_{\delta}(\Sigma)\\
    &\\ w_{\delta,k,n}=0& {\rm on}\,\,\, \partial T_{\delta}(\Sigma)
    \end{array}\right.\label{eq16}\end{equation}
    
    \noindent  We are going to prove,  by induction on $k$, that there exists a function $$F_{\delta,k,n}: (0,\delta) \rightarrow \mathbb{R}$$ \noindent such that, if we denote as $$\tilde{u}_{\delta,k,n}:T_{\delta}(\Sigma) \rightarrow \mathbb{R}$$ \noindent  the transplanted function defined as $\tilde{u}_{\delta,k,n}(x):=u_{\delta,k,n}(s(x))\,\,\,\forall x \in T_{\delta}(\Sigma)$, being $s(x)=s_M(x)={\rm dist}_{M}(x, \Sigma)$  then
    
    \begin{equation}\label{eq2'}
    w_{\delta,k,n}(x)\geq \tilde{u}_{\delta,k,n}(s(x))\geq (F_{\delta,k,n}\circ s)(x)\,\,\,\,\forall x \in T_{\delta}(\Sigma).\end{equation}
and moreover, 
$$ \lim_{n\to \infty}F_{\delta,k,n}(r)=\infty \,\,\,\forall r \in (0,\delta), \,\,\,\forall k \in \mathbb{N} $$    

  \noindent We have, first, that Theorem \eqref{thmmain3} implies that induction hypothesis is true for $k=1$, with 
  $$F_{\delta,1,n}(r)=F_{\delta,n}(r)=\frac{(\delta-r)}{\sin(\delta)}\cdot\frac{1-\cos^{n}(r)}{n\cos^{n-1}(r)}$$

  \noindent  Assume that induction hypothesis is true for $k-1$, namely, that $w_{\delta,k-1,n} \geq \tilde{u}_{\delta,k-1,n}$ in $T_{\delta}(\Sigma)$, and that there exists a function $F_{\delta,k-1,n}: (0,\delta) \rightarrow \mathbb{R}$ such that 
  $$w_{\delta,k-1,n}(x)\geq \tilde{u}_{\delta,k-1,n}(s(x)) \geq F_{\delta,k-1,n}(s(x)) \,\,\,\,\forall x \in T_{\delta}(\Sigma)$$ and which satisfies  that
$$ \lim_{n\to \infty}F_{\delta,k-1,n}(r)=\infty \,\,\,\forall r \in (0,\delta)$$  

\noindent Here, $\tilde{u}_{\delta,k-1,n}(x)=u_{\delta,k-1,n}(s_M(x))$ is the transplanted function to $T_{\delta}(\Sigma)$.
    
    Using \eqref{eq16} and \eqref{eq15}, the fact that $\frac{\partial \, \tilde{u}_{\delta,k,n}}{\partial s}(s)=\frac{\partial \, u_{\delta,k,n}}{\partial s}(s)\le 0$, that $\frac{\partial^{2} \, \tilde{u}_{\delta,k,n}}{\partial s^2}(s)=\frac{\partial^{2} \, u_{\delta,k,n}}{\partial s^2}(s)$, the induction hypothesis and \cite[lemma 8.28 iii.]{gray}, we can compute, for the transplanted function $\tilde{u}_{\delta,k,n}: T_{\delta}(\Sigma) \rightarrow \mathbb{R}$ defined as $\tilde{u}_{\delta,k,n}(x)=u_{\delta,k,n}(s_M(x))$:

 \begin{equation}
 \begin{aligned}
 \triangle_{_M}\tilde{u}_{\delta,k,n}&=\displaystyle\frac{\partial^{2}\, \tilde{u}_{\delta,k,n}}{\partial s^{2}}(s)-{\rm tr}\tilde{S}_\xi(s)\frac{\partial \tilde{u}_{\delta,k,n}}{\partial s}(s) \\
&=\displaystyle\frac{\partial^{2}\, \tilde{u}_{\delta,k,n}}{\partial s^{2}}(s)-(n-1)\tan(s)\frac{\partial \tilde{u}_{\delta,k,n}}{\partial s}(s)\\&+(n-1)\tan(s)\frac{\partial \tilde{u}_{\delta,k,n}}{\partial s}(s)-{\rm tr}\tilde{S}_\xi(s)\frac{\partial \tilde{u}_{\delta,k,n}}{\partial s}(s) \\&= \triangle_{_\mathbb{S}^{n}} u_{\delta,k,n}+\big( (n-1)\tan(s)-{\rm tr}\tilde{S}_\xi(s)\big)\frac{\partial \tilde{u}_{\delta,k,n}}{\partial s}(s) \\&= -k u_{\delta,k-1,n}(s)+\big( (n-1)\tan(s)-{\rm tr}\tilde{S}_\xi(s)\big)\frac{\partial \tilde{u}_{\delta,k,n}}{\partial s}(s)\\ &\geq \big((n-1) \tan(s)-{\rm tr}\tilde{S}_\xi(s)\big)\frac{\partial u_{\delta,k,n}}{\partial s}(s) - kw_{\delta,k-1,n} \\
&\geq   - kw_{\delta,k-1,n}\\
&=\triangle_{_M}w_{\delta,k,n}
\end{aligned}
\end{equation}

Therefore $\triangle_{M}(u_{\delta,k,n}-w_{\delta,k,n})\geq 0$. Since $u_{\delta,k,n}-w_{\delta,k,n}=0$ on the boundary  $\partial T_{\delta}(\Sigma)$ one has $u_{\delta,k,n}-w_{\delta,k,n}\leq 0$ so we have proved  inequality
$$w_{\delta,k,n}(x)\geq u_{\delta,k,n}(s(x))\,\,\forall x \in T_{\delta}(\Sigma)$$
\noindent On the other hand, by Corollary \ref{thmmain2}, we know that the $k^{\rm th}$-exit moment $u_{\delta,k,n}$ defined on the tube $T_{\delta}(\mathbb{S}^{n-1})$ is bounded from below by the  a function $F_{\delta,k,n}\circ s$ as
$$
\begin{aligned}
u_{\delta,k,n}(\tilde{x})&=u_{\delta,k,n}(s(\tilde{x}))\geq (F_{\delta,k,n}\circ s)(\tilde{x})\,\,\,\,\forall \tilde{x} \in T_{\delta}(\mathbb{S}^{n-1})\raisepunct{.}
\end{aligned} $$
\noindent where $s(\tilde{x})=\mathrm{dist}_{\mathbb{S}^{n}}(\tilde{x},\mathbb{S}^{n-1})$ and  $F_{\delta,k,n}: (0,\delta) \rightarrow \mathbb{R}$ is a function which satisfies  that
$$ \lim_{n\to \infty}F_{\delta,k,n}(r)=\infty \,\,\,\forall r \in (0,\delta), \,\,\,\forall k \in \mathbb{N} $$

Hence, we can say exactly the same, with same function $F_{\delta,k,n}\circ s$, replacing the distance function $s(\tilde{x})=\mathrm{dist}_{\mathbb{S}^{n}}(\tilde{x},\mathbb{S}^{n-1})$ by the distance in $M$, $s(x)=\mathrm{dist}_{M}(x,\Sigma)$, because if we consider the transplanted function $u_{\delta,k,n}\circ s_M$, then, given $x \in T_{\delta}(\Sigma)$, we have, taking $\tilde{x} \in T_{\delta}(\mathbb{S}^{n-1})$ such that $\mathrm{dist}_{\mathbb{S}^{n}}(\tilde{x},\mathbb{S}^{n-1})=\mathrm{dist}_{M}(x,\Sigma)$, that
$$u_{\delta,k,n}(s_M(x))=u_{\delta,k,n}(s_{\mathbb{S}^{n}}(\tilde{x}))\geq F_{\delta,k,n}(s_{\mathbb{S}^{n}}(\tilde{x}))=F_{\delta,k,n}(s_M(x))$$

%%%%%%%%%%%%%%%%%%%%%%%%%%
\subsection{Proof of  Corollary \ref{teo:Shiohama}}\
%%%%%%%%%%%%%%%%%%%%%%%%%%

Since $K_M\geq 1$ implies   that  ${\rm Ric}_M\geq n-1$, the first part of the corollary follows from theorem \ref{thmmain3}. 
To prove that $M$ is homeomorphic to $\mathbb{S}^n$, 
we only need to prove that the diameter  of $M$ is greater than $\pi/2$. Suppose otherwise that  the diameter of $M$ is bounded from above by ${\rm diam}(M)< \pi/2$, hence by Bishop-Gromov volume comparison theorem and equation  \eqref{eq:cor} we have \[1\geq \frac{{\rm vol}(B_M(D))}{{\rm vol}(B_{\mathbb{S}^{n}}(D))}\geq \frac{{\rm vol}(M)}{{\rm vol}(B_{\mathbb{S}^{n}}(\pi/2))}\geq \frac{{\rm vol}(\Sigma)}{{\rm vol}(\mathbb{S}^{n-1})}2(1-2e^{-(n-1)\delta^{2}/2} ).\] 
This would imply that 
$$
{\rm vol}(\Sigma)\leq \frac{1}{2}\frac{1}{1-2e^{-(n-1)\delta^{2}/2}}{\rm vol}(\mathbb{S}^{n-1}).
$$
Therefore, since we are assuming 
$$
{\rm vol}(\Sigma)> \frac{1}{2}\frac{1}{1-2e^{-(n-1)\delta^{2}/2}}{\rm vol}(\mathbb{S}^{n-1}),
$$
then ${\rm diam}(M)\geq \pi/2$ and the result follows from the celebrated Grove-Shiohama sphere theorem \eqref{thmGS}:

\begin{theorem}[Grove-Shiohama \cite{grove-shiohama77}] Let $M$ be a compact Riemannian $n$-manifold with sectional curvature $K_{M}\geq 1$. If ${\rm diam}(M)\geq \pi/2$ then is homeomorphic to $ \mathbb{S}^{n}$.\label{thmGS}
\end{theorem}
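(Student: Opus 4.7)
The plan is to follow Grove and Shiohama's original strategy, which combines Toponogov's triangle comparison with a critical point theory for distance functions. First, choose $p,q\in M$ realizing the diameter, so $D:=d(p,q)=\mathrm{diam}(M)\geq\pi/2$; by Bonnet--Myers the hypothesis $K_M\geq 1$ also forces $D\leq\pi$. Set $f:=d(p,\cdot)$, a Lipschitz function on $M$. Recall that $x\in M\setminus\{p\}$ is a \emph{critical point} of $f$ in the Grove--Shiohama sense if for every unit $v\in T_xM$ there exists a minimizing geodesic from $x$ to $p$ whose initial tangent makes angle $\leq\pi/2$ with $v$. The heart of the proof is to show that $f$ has no critical points on $M\setminus\{p,q\}$.

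Fix such an $x$, and let $v\in T_xM$ be the initial velocity of a minimizing geodesic from $x$ to $q$. Suppose for contradiction there is a minimizing geodesic from $x$ to $p$ whose initial tangent $w$ satisfies $\alpha:=\angle(v,w)\leq\pi/2$. Apply the hinge form of Toponogov's comparison (valid since $K_M\geq 1$) to the hinge at $x$ with legs $w,v$ of lengths $a:=d(x,p)$, $b:=d(x,q)$: the third side is bounded by the corresponding side of the comparison hinge on $\mathbb{S}^2(1)$, yielding
\begin{equation*}
\cos D\ \geq\ \cos a\cos b+\sin a\sin b\cos\alpha.
\end{equation*}
Since $p,q$ realize the diameter, $a,b\leq D$, and since $x\notin\{p,q\}$ both $a,b>0$. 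A short case analysis, using $\cos\alpha\geq 0$ together with the triangle inequality $a+b\geq D$ and the sign of $\cos D$ (nonpositive because $D\geq\pi/2$), shows that the right-hand side strictly exceeds $\cos D$, giving the desired contradiction. Hence $x$ is a regular point of $f$.

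With non-criticality established, I would invoke the Grove--Shiohama isotopy lemma. At every regular point $x$ the set of admissible ``gradient-like'' directions (those making angle $<\pi/2$ with every minimizing direction to $p$) is a non-empty open convex cone; a partition-of-unity argument patches local smooth sections of these cones into a globally defined continuous vector field $X$ along whose flow $f$ is strictly increasing. Because $f$ has only the two critical points $p$ (absolute minimum, value $0$) and $q$ (absolute maximum, value $D$), the flow of $X$ deformation-retracts $M\setminus\{q\}$ onto a small metric ball around $p$ and, symmetrically, a complementary neighborhood onto a small ball around $q$. Therefore $M$ is homeomorphic to two closed topological $n$-disks glued along their common boundary $(n-1)$-sphere, hence to $\mathbb{S}^n$.

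The main obstacle is the borderline subcase within the Toponogov step when $D=\pi/2$ or when $a$ or $b$ is close to $D$: there the strict contradiction becomes tight, and one must exploit rigidity either in the Toponogov inequality (a strict minimizing geodesic configuration forcing $x$ to lie on an $pq$-segment, contradicting the extremality of $D$) or in the triangle inequality $a+b\geq D$. A secondary technical point is the construction of the gradient-like flow for a merely Lipschitz $f$; this requires the careful local convex-combination argument that is the original innovation of \cite{grove-shiohama77}, together with a compactness argument ensuring that the cone of admissible directions varies semicontinuously in $x$.
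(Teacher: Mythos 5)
The paper offers no proof of this statement: it is quoted as Theorem \ref{thmGS} from Grove--Shiohama \cite{grove-shiohama77} and used as a black box in the proof of Corollary \ref{teo:Shiohama}. What you have written is therefore a reconstruction of the original argument, and it is essentially the standard one: critical point theory for $f=d(p,\cdot)$, a Toponogov hinge comparison to exclude critical points in $M\setminus\{p,q\}$, and the isotopy lemma presenting $M$ as two topological $n$-disks glued along an $(n-1)$-sphere. For the record, when $D>\pi/2$ your ``short case analysis'' does close with the single hinge at $x$, and the triangle inequality $a+b\geq D$ is not needed: Toponogov gives $\cos D\geq \cos a\cos b$, while for $0<a,b\leq D\leq\pi$ a sign check (both cosines nonnegative; exactly one negative, in which case $\cos a\cos b>\cos a\geq\cos D$; both negative) yields $\cos a\cos b>\cos D$ whenever $\cos D<0$, a contradiction. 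Some expositions instead apply Toponogov a second time at the hinge at $q$ (which is automatically critical, being a farthest point from $p$), but one application suffices.

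The genuine gap is the borderline case you flag at the end, and it cannot be repaired by ``exploiting rigidity'': the statement as printed, with ${\rm diam}(M)\geq\pi/2$, is false. The projective space $\mathbb{RP}^n$ with its constant curvature $1$ metric satisfies $K\equiv 1$ and has diameter exactly $\pi/2$, yet is not homeomorphic to $\mathbb{S}^n$; more generally, the Gromoll--Grove diameter rigidity theorem shows that the equality case ${\rm diam}=\pi/2$ admits all compact rank one symmetric spaces. The correct hypothesis in \cite{grove-shiohama77} is the strict inequality ${\rm diam}(M)>\pi/2$, and that is precisely where your Toponogov step degenerates: for $D=\pi/2$ one has $\cos D=0$, and e.g.\ $a=\pi/2$ gives $\cos a\cos b=0=\cos D$ with no contradiction. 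So the fix is not a finer analysis at $D=\pi/2$ but a correction of the hypothesis to ${\rm diam}(M)>\pi/2$; note that this also matters for how the theorem is invoked in the paper, since the argument for Corollary \ref{teo:Shiohama} only establishes ${\rm diam}(M)\geq\pi/2$.
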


% and that $\Sigma_\delta \equiv \partial B_{\pi/2}(N)_\delta$. Therefore $${\rm vol}(M)\geq {\rm vol}(\Sigma_\delta)={\rm vol}( \partial B_{\pi/2}(N)_\delta)\geq \omega_n(1-e^{-(n-1)\delta^2/2}\geq \omega_n/2.$$ By Bishop-Gromov volume comparison theorem ${\rm diam}(M)\geq \pi/2$. If $K_M\geq 1$ we have by Grove-Shiohama sphere theorem $M$ is homeomorphic to $\mathbb{S}^n$.

%%%%%%%%%%%%%%%%%%%%%%%%%
%%%%%%%%%%%%%%%%%%%%%%
%\section{Appendix}\label{secApendix}
%%%%%%%%%%%%%%%%%%%%%%
%%%%%%%%%%%%%%%%%%%%

\end{document}